\newtheorem{theorem}{Theorem}[section]
\newtheorem{lemma}[theorem]{Lemma}
\newtheorem{proposition}[theorem]{Proposition}
\newtheorem{conjecture}[theorem]{Conjecture}
\newtheorem{question}[theorem]{Question}
\newtheorem{corollary}[theorem]{Corollary} 
\theoremstyle{definition}
\newtheorem{definition}[theorem]{Definition}
\theoremstyle{remark}
\newtheorem{remark}[theorem]{Remark}
\newtheorem{example}[theorem]{Example}
\newtheorem{case}{Case}
\newtheorem{subcase}{Case}
\numberwithin{subcase}{case}
\newtheorem{subsubcase}{Case}
\numberwithin{subsubcase}{subcase}
\numberwithin{subsubsubcase}{subsubcase}
\DeclareMathOperator{\rsrank}{rsrank}
\DeclareMathOperator{\Sym}{Sym}
\DeclareMathOperator{\res}{res}
\keywords{Natural computing, combinatorics, reaction system rank, permutation, functional equivalence}
\begin{document}

\title[Ranks of Strictly Minimal Reaction Systems]{Ranks of Strictly Minimal Reaction Systems Induced by Permutations and Cartesian Product}
\author{Wen Chean Teh}
\address{School of Mathematical Sciences\\
	Universiti Sains Malaysia\\
	11800 USM, Malaysia}
\email[Corresponding author]{dasmenteh@usm.my}
\author{Kien Trung Nguyen}
\address{Department of Mathematics\\ Teacher College, Can Tho University\\ Can Tho, Vietnam}
\email{trungkien@ctu.edu.vn}
\author{Chuei Yee Chen}
\address{Department of Mathematics\\ Faculty of Science, Universiti Putra Malaysia\\ Malaysia}
\email{cychen@upm.edu.my}

\begin{abstract}
Reaction system is a computing model inspired by the biochemical interaction taking place within the living cells. Various extended or modified frameworks motivated by biological, physical, or purely mathematically considerations have been proposed and received significant amount of attention, notably in the recent years. This study, however, takes after particular early works that concentrated on the mathematical nature of minimal reaction systems in the context-free basic framework and motivated by a recent result on the sufficiency of strictly minimal reaction systems to simulate every reaction system. This paper focuses on the largest reaction system rank attainable by strictly minimal reaction systems, where the rank pertains to the minimum size of a functionally equivalent reaction system. Precisely, we provide a very detailed study for specific strictly minimal reaction system induced by permutations, up to the quaternary alphabet. Along the way, we obtain a general result about reaction system rank for Cartesian product of functions specified by reaction systems.  	
\end{abstract}

\maketitle{  }

\section{Introduction}

Reaction system \cite{ehrenfeucht2007reaction} is a formal model of natural computing intuitively motivated by the mechanisms of facilitation and inhibition that govern the biochemical iteractions within the living cells.  
Although its original framework is simple, due to its versatile setup and applicability, it has evolved to incorporate various biological  extensions \cite{azimi2015dependency, barbuti2016investigating} and contextual investigations \cite{barbuti2016specialized, bottoni2019reaction, meski2015model}. The room for inquiry is wide open as various novel extended frameworks, for example \cite{bottoni2020networks, ehrenfeucht2017evolving, kleijn2020reaction},
are proposed notably in recent years. 
Purely mathematically motivated studies that exhibit interplay with graph theory can also be found \cite{brijder2012representing, genova2017graph, kreowski2018graph}. It is often compared to the P system and attempts to bridge the two research areas has been initiated \cite{paun2013bridging}. For the latest motivational survey on reaction systems, we refer the reader to \cite{ehrenfeucht2017reaction}.

Our contribution focuses on the mathematical properties of the basic framework.
The early studies in this line of research has revolved mainly around minimal reaction systems \cite{ehrenfeucht2012minimal, salomaa2015applications, salomaa2017minimal} which have a nice algebraic characterization. Despite their simplicity, they are sufficiently rich in the sense that all other functions specified by reaction systems   
over the ternary alphabet can be generated from them under function composition \cite{salomaa2014compositions}, although this is not the case for the quarternary alphabet  \cite{teh2018compositions}. On the other hand, replacing composition by some sense of simulation, Manzoni, Pocas, and Porreca \cite{manzoni2014simple} showed that every reaction system can be simulated if the background set is extended by extra resouces, in particular by reaction systems with each reaction having only a single resource. These simpler reaction systems are called strictly minimal in \cite{teh2020simulation}, where their simulation power is further studied.

Extremal combinatorics for reaction systems was first studied in \cite{dennunzio2015reaction}, where the minimum size after which a reaction system possess a certain property is determined. Independently, reaction system rank was introduced in \cite{teh2017irreducible} as a measure of the complexity of a reaction system  according to the minimum size of a set of reactions functionally equivalent to it. It was found that for every background set $S$, the largest possible reaction system rank $2^{\vert S\vert}$ is effectively attainable, while the largest attainable rank by a reaction system specifying a bijective state transition function is $2^{\vert S\vert}-1$. However, considering that the optimization version of the  set cover problem in \mbox{NP-hard}, finding reaction system rank is computationally infeasible. In fact, the largest reaction system rank attainable by minimal reaction systems was left unsolved and posed as an open problem in \cite{teh2017minimal}. This contribution studies this problem for strictly minimal reaction systems instead, driven by their canonicalness and simple structures.

The remainder of this paper is structured as follows.  Section~2 provides the preliminaries for reaction systems. Subsequently, reaction system rank of Cartesian product of functions specified by general reaction systems is discussed in Section 3. Setting our focus on strictly minimal reaction systems, Section~4 briefly explores irreducibility of these reaction systems. The next three sections address the largest reaction system rank of strictly minimal reaction systems.  Section~5 provides the answer for the ternary alphabet and interprets it as an equivalent refreshing result about some vertex labelling of the cube graph. Meanwhile, the two following sections lead to the answer for the quaternary alphabet by focusing on strictly minimal reaction systems induced by permutations, especially full cycles. The limitation of our approach to cater for higher alphabets, as well as some future directions, is discussed in our conclusion.

\section{Basic Notions of Reaction Systems}\label{1203a}
Throughout this paper, we consider $S$  to be a fixed finite nonempty set. We denote the cardinality of $S$ by $\vert S\vert$, the power set of $S$ by $2^S$, and the symmetric group on $S$  by $\Sym(S)$.

\begin{definition}
A \emph{reaction in $S$} is  a triple
$a=(R_a,I_a, P_a)$, where $R_a$ and $I_a$ are disjoint (possibly empty) subsets of $S$, and $P_a$ is a nonempty subset of $S$. The sets $R_a$, $I_a$, and $P_a$ are the \emph{reactant set}, \emph{inhibitor set}, and \emph{product set}, respectively.
\end{definition}

\begin{definition}
A \emph{reaction system over $S$} is a pair $\mathcal{A}=(S,A)$ where $A$ is a (possibly empty) set of reactions in $S$ and $S$ is called the corresponding \emph{background set}. We say that $\mathcal{A}$ is \emph{nondegenerate} if $R_a$ and $I_a$ are both nonempty for every $a\in A$. 
\end{definition}

\begin{definition}
Suppose $\mathcal{A}=(S,A)$ is a reaction system over $S$. The (state transition) function 
$\res_{\mathcal{A}}\colon 2^S\rightarrow 2^S$ \emph{specified by $\mathcal{A}$} is given by
$$\res_{\mathcal{A}}(X)= \bigcup_{\substack{a\in A\\
R_a\subseteq X, I_a\cap X=\emptyset} } P_a \quad,\quad \text{for all }X\subseteq S.   $$
\end{definition}

When $S$ is understood, we may identify $\mathcal{A}$ with $A$ and write $\res_A$ for $\res_{\mathcal{A}}$. 

Suppose $a$ is a reaction in $S$ and $X\subseteq S$. If $R_a\subseteq X$ and $I_a\cap X=\emptyset$, then we say that $a$ is \emph{enabled by $X$}. The elements in $R_a\cup I_a$ are called the \emph{resources} of $a$.

\begin{definition}
Suppose $\mathcal{A}$ and $\mathcal{B}$ are reaction systems over $S$. We say that $\mathcal{A}$ and $\mathcal{B}$ are \emph{functionally equivalent}  if{f}
$\res_\mathcal{A}= \res_\mathcal{B}$ (that is, $\res_\mathcal{A}(X)= \res_\mathcal{B}(X)$
for all $X\subseteq S$). 
\end{definition}

If $a\in A$, we say that $(R_a,I_a)$ is the \emph{core of $a$}.
It is clear that by taking union of the product sets, reactions with the same core  can be functional equivalently replaced by a single reaction. Henceforth, unless stated otherwise, from Section~\ref{240420a} onwards, we will adopt the following standard convention.
$$\textit{No two distinct reactions in any reaction system have the same core.}$$

\begin{definition}
Suppose $\mathcal{A}=(S,A)$ is a reaction system. We say that $\mathcal{A}$ is \emph{irreducible} if{f} $\res_A\neq \res_B$ for every proper subset $B$ of $A$.
Otherwise, $\mathcal{A}$ is \emph{reducible}.
\end{definition}

We call a function $f\colon 2^S\rightarrow 2^S$ as an \emph{rs function over $S$}, where $rs$ stands for reaction system. Every such function can be canonically specified by some reaction system over $S$. 

\begin{definition}
Suppose $f$ is an rs function over $S$. The \emph{reaction system rank of $f$} (or simply \emph{rs rank}), denoted $\rsrank(f)$, is defined by
$$\rsrank(f)=\min \{\,\vert A\vert :  \mathcal{A}=(S,A) \text{ is a reaction system  such that }\res_{\mathcal{A}}=f    \,\}.$$ 
If $\res_\mathcal{A}=f$ and $\vert A\vert=\rsrank(f)$, we say that $\mathcal{A}=(S,A)$ \emph{witnesses} the rs rank of $f$.
\end{definition}

We may by the $rs$ rank of a reaction system $\mathcal{A}$ refer to the  $rs$ rank of $\res_{ \mathcal{A}}$.
	
The notion of reaction system rank was very recently introduced by Teh and Atanasiu in \cite{teh2017irreducible}. It was shown that the $rs$ rank of $rs$ functions over $S$ is bounded above by  $2^{\vert S\vert}$ and the bound is tight. Futhermore, the largest $rs$ rank for bijective $rs$ functions over $S$ is $2^{\vert S\vert}-1$ and it is effectively attainable.  However, the authors acknowledged that they were unaware at that time that the notion of reducible was called redundant in \cite{dennunzio2015reaction}, where it was study as one of the extremal combinatorics results.

Every reaction system witnessing the $rs$ rank of an $rs$ function is irreducible. However, the cardinality of an irreducible reaction system over $S$ can go beyond $2^{\vert S\vert}$ (see Example~18 in \cite{teh2017minimal}) but any good upper bound remains to be seen. Dropping the previous assumption on reaction systems though, then it was shown that there exists an irreducible reaction system specifying any given $rs$ function $f$ where its size can be arbitrarily chosen between $\rsrank(f)$ and some tight upper bound depending on $f$ \cite{teh2018irreducible}.

\begin{definition}\label{1507d}
	Suppose $\mathcal{A}=(S,A)$ is reaction system.
	We say that  $\mathcal{A}$ is \emph{strictly minimal} if $\vert R_a\cup I_a\vert\leq 1$
	for every reaction $a\in A$.
\end{definition}

In the early studies, reaction systems are 
classified according to the number of resources in its reactions and mostly assumed to be nondegenerate due to naturality.  A simple algebraic characterization for $rs$ functions specified by minimal reaction systems (that is, where $\vert R_a\vert=\vert I_a\vert= 1$ for all reactions $a\in A$) was obtained in \cite{ehrenfeucht2012minimal}. Later, Manzoni et al.~\cite{manzoni2014simple} showed that the class of  reaction systems where every reaction has only a single resource is sufficiently rich to simulate every $rs$ function over $S$ in some sense. This study of simulation was continued in \cite{teh2020simulation}, where the term strictly minimal was first introduced.

\section{Reaction system rank of Cartesian product}

In this section, we study reaction system rank of Cartesian product of $rs$ functions.

\begin{definition}\label{260420e}
Suppose $f$ is an $rs$ function over $S$ and suppose $g$ is an $rs$ function over $T$, where $S\cap T=\emptyset$.
The \emph{Cartesian product of $f$ and $g$}, denoted $f\times g$, is the $rs$ function over $S\cup T$ defined by
$$(f\times g)(X)= f(X\cap S)\cup g(X\cap T), \; \text{ for all } X\subseteq S\cup T.$$ 	
\end{definition}

Definition~\ref{260420e} takes after the usual definition of the Cartesian product of $f$ and $g$ where it is the operation  on $2^S\times 2^T$ that maps $(X,Y)$ to $(f(X), g(Y))$ for all $(X,Y)\in 2^S\times 2^T$. Since $S$ and $T$ are disjoint, the tuple $(X,Y)$ can be identified with the subset $X\cup Y$ of $S\cup T$ while $(f(X), g(Y))$ can be identified with $f(X)\cup g(Y)$. 

\begin{remark}\label{2603b}
	If $A$ is a reaction system over $S$ and $B$ is a reaction system over $T$, then $A\cup B$ is a reaction system over $S\cup T$ and	
	$\res_{A\cup B}= res_A\times \res_B$.	
\end{remark}

For the rest of this section, we assume $S\cap T=\emptyset$. Also, we fix an $rs$ function $f$ over $S$ and an $rs$ function $g$ over $T$. Our aim is to study $\rsrank(f\times g)$ in terms of $\rsrank(f)$ and $\rsrank(g)$. We begin with the following trivial upper bound.

\begin{lemma}\label{2603a}
$\rsrank(f\times g)\leq  \rsrank(f)+\rsrank(g)$.	
\end{lemma}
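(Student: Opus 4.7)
The plan is to construct a witnessing reaction system for $f\times g$ simply by taking the disjoint union of optimal reaction systems for $f$ and for $g$. Specifically, let $\mathcal{A}=(S,A)$ witness $\rsrank(f)$ and $\mathcal{B}=(T,B)$ witness $\rsrank(g)$, so that $|A|=\rsrank(f)$, $|B|=\rsrank(g)$, $\res_A=f$, and $\res_B=g$. By Remark~\ref{2603b}, the set $A\cup B$ is a reaction system over $S\cup T$ whose specified function is $\res_A\times\res_B=f\times g$, so by definition of $\rsrank$ we immediately get $\rsrank(f\times g)\leq |A\cup B|$.

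The only thing left to verify is that this union is genuinely disjoint, i.e.\ that $|A\cup B|=|A|+|B|$. This follows from the standing disjointness hypothesis $S\cap T=\emptyset$ together with the requirement in the definition of a reaction that the product set is nonempty: for any $a\in A$ we have $\emptyset\neq P_a\subseteq S$, while for any $b\in B$ we have $\emptyset\neq P_b\subseteq T$, so no reaction in $A$ can coincide with a reaction in $B$ as triples. Combining this with the previous paragraph yields $\rsrank(f\times g)\leq |A|+|B|=\rsrank(f)+\rsrank(g)$, as required.

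There is no real obstacle here; the lemma is essentially a bookkeeping statement that packages Remark~\ref{2603b} into an inequality on ranks. The only (minor) point requiring care is the observation on nonempty product sets that keeps the two reaction sets disjoint, which is why the inequality is additive rather than just a bound by $\max$ or similar. The interesting content will presumably come later when one asks whether equality holds, but that is beyond the present statement.
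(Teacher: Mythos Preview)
Your proof is correct and follows exactly the same approach as the paper: choose witnesses $A$ and $B$ for $\rsrank(f)$ and $\rsrank(g)$, invoke Remark~\ref{2603b} to see that $A\cup B$ specifies $f\times g$, and conclude. The paper simply asserts $\vert A\cup B\vert=\rsrank(f)+\rsrank(g)$ without comment, whereas you supply the small justification (nonempty product sets in disjoint background sets force $A\cap B=\emptyset$); that extra sentence is a welcome clarification, not a different argument.
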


\begin{proof}
Suppose $A$ is a reaction system over $S$ witnessing the $rs$ rank of $f$ and $B$ is a reaction system over $T$ witnessing the $rs$ rank of $g$. Then by Remark~\ref{2603b}, $A\cup B$ is a reaction system over $S \cup T$ such that
$\res_{A\cup B}= f\times g$. Therefore, $\rsrank(f\times g)\leq \vert A\cup B \vert=\rsrank(f)+\rsrank(g)$.
\end{proof}

Next, we show that equality $\rsrank(f\times g)=  \rsrank(f)+\rsrank(g)$ holds when the corresponding reaction systems are nondegenerate.
For convenience, we do not adopt the convention that distinct reactions in a reaction system have distinct cores.

\begin{lemma}\label{1003a}
Suppose $A$ is a reaction system over $S\cup T$ such that 
$f\times g =\res_{A}$. Then $f$ can be specified by the following reaction system $A'$ over $S$, where
	$$A'= \{\, (R_a, I_a\cap S, P_a\cap S)\mid a\in A, R_a\subseteq S, \text{and } P_a\cap S\neq \emptyset  \,\}.$$	
\end{lemma}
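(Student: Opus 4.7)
The plan is to show $\res_{A'}(X)=f(X)$ for every $X\subseteq S$ by a direct definitional unpacking. The key observation to deploy first is that since $S\cap T=\emptyset$, we have $X\cap T=\emptyset$ for $X\subseteq S$, so $(f\times g)(X)=f(X)\cup g(\emptyset)$; intersecting with $S$ kills the $g$-contribution because $g(\emptyset)\subseteq T$. Combined with the hypothesis $\res_A=f\times g$, this yields the crucial identity $f(X)=\res_A(X)\cap S$, which is what lets us pass from reactions in $A$ (whose products live in $S\cup T$) to reactions on $S$.

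Next I would expand $\res_A(X)\cap S$ using the formula from the definition of $\res$: it equals the union of $P_a\cap S$ taken over all $a\in A$ that are enabled by $X$, and I can safely restrict the union to those $a$ with $P_a\cap S\ne\emptyset$ since the other terms contribute nothing. Then I would observe two simplifications that rely on $X\subseteq S$: first, the condition $R_a\subseteq X$ automatically forces $R_a\subseteq S$, so we may insert that requirement without changing the union; second, the inhibition condition $I_a\cap X=\emptyset$ coincides with $(I_a\cap S)\cap X=\emptyset$, since elements of $I_a\setminus S$ lie in $T$ and cannot meet $X$.

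With these reductions in hand, the union over $A$ matches term-for-term the defining union of $\res_{A'}(X)$, because each reaction $a\in A$ contributing to $f(X)$ corresponds precisely to the reaction $(R_a, I_a\cap S, P_a\cap S)$ in $A'$, and the enabling conditions agree. This gives $\res_{A'}(X)=f(X)$ for every $X\subseteq S$, completing the proof.

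I do not anticipate any real obstacle here; the argument is a careful but routine bookkeeping exercise, and the only subtlety is being explicit that the non-adoption of the convention ``distinct reactions have distinct cores'' (flagged in the paragraph preceding the lemma) is what allows $A'$ to be written as a set-builder expression even when two different reactions of $A$ project to the same core on $S$.
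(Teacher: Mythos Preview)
Your proposal is correct and follows essentially the same approach as the paper. The paper presents the argument as two element-chasing inclusions while you package it as a chain of set equalities, but the substantive ingredients are identical: the identity $f(X)=\res_A(X)\cap S$ coming from $(f\times g)(X)=f(X)\cup g(\emptyset)$ with $g(\emptyset)\subseteq T$, the fact that $R_a\subseteq X\subseteq S$ forces $R_a\subseteq S$, and the observation that $I_a\cap X=(I_a\cap S)\cap X$ for $X\subseteq S$.
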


\begin{proof}
Suppose $X\subseteq S$ is arbitrary. We need to show that $f(X)= \res_{A'}(X)$. Suppose $y\in \res_{A'}(X)$. Then 
$(R_a, I_a\cap S, P_a\cap S)$ is enabled by $X$ for some $a\in A$ such that $R_a\subseteq S$ and $y\in P_a\cap S$. It follows that $a$ is enabled by $X$ and thus $y\in \res_A(X)$. Since $\res_{A}(X)= f(X) \cup g(\emptyset)$ and $g(\emptyset) \subseteq T$, it follows that $y\in f(X)$.

Conversely, suppose $y\in f(X)$. Then $y\in f(X) \cup g(\emptyset)=\res_A(X)$. Hence, $a$ is enabled by $X$ for some $a\in A$ such that $y\in P_a$. It follows that the reaction  $(R_a, I_a\cap S, P_a\cap S)$ in $ A'$ is enabled by $X$. Therefore, $y \in \res_{A'}(X)$. 
\end{proof}

\begin{theorem}\label{0903a}
Suppose $f$ is an $rs$ function over $S$ and suppose $g$ is an $rs$ function over $T$, where $S\cap T=\emptyset$.
If $f(\emptyset)= g(\emptyset)=\emptyset$, then $\rsrank(f\times g)= \rsrank(f)+\rsrank(g)$.	
\end{theorem}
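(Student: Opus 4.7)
The plan is straightforward: Lemma~\ref{2603a} already gives the upper bound $\rsrank(f \times g) \leq \rsrank(f) + \rsrank(g)$, so all that remains is to establish the matching lower bound $\rsrank(f \times g) \geq \rsrank(f) + \rsrank(g)$ under the hypothesis $f(\emptyset) = g(\emptyset) = \emptyset$.

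To set this up, I will fix a reaction system $A$ over $S \cup T$ that witnesses the rs rank of $f \times g$ (so that $\res_A = f \times g$ and $|A| = \rsrank(f \times g)$) and distinguish two subsets of $A$: let $A_1 = \{\, a \in A : R_a \subseteq S \text{ and } P_a \cap S \neq \emptyset\,\}$ and let $A_2 = \{\, a \in A : R_a \subseteq T \text{ and } P_a \cap T \neq \emptyset\,\}$. Lemma~\ref{1003a} applied to $A$ produces a reaction system over $S$ specifying $f$ whose reactions are in surjective correspondence with $A_1$; this immediately gives $\rsrank(f) \leq |A_1|$, and the symmetric version of the lemma (swapping the roles of $S, T$ and of $f, g$) yields $\rsrank(g) \leq |A_2|$.

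The heart of the argument -- and essentially the only place where the hypothesis $f(\emptyset) = g(\emptyset) = \emptyset$ enters -- is showing that $A_1$ and $A_2$ are disjoint subsets of $A$. If some reaction $a$ lay in both, then $R_a$ would be contained in $S \cap T = \emptyset$, so $R_a = \emptyset$; since $I_a$ is automatically disjoint from the empty state, such $a$ would be enabled by $\emptyset$, forcing $\emptyset \neq P_a \subseteq \res_A(\emptyset) = (f \times g)(\emptyset) = f(\emptyset) \cup g(\emptyset) = \emptyset$, a contradiction. Once disjointness is in hand, $|A| \geq |A_1| + |A_2| \geq \rsrank(f) + \rsrank(g)$, completing the proof. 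I do not anticipate any real obstacle here; the main insight is simply that the hypothesis surgically rules out overlap between the two subsets by forbidding any reaction with empty reactant set.
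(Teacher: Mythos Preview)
Your proposal is correct and follows essentially the same route as the paper's own proof: both take a witness $A$ for $\rsrank(f\times g)$, invoke Lemma~\ref{1003a} and its symmetric counterpart to bound $\rsrank(f)$ and $\rsrank(g)$ by the sizes of the two index sets $\{a\in A: R_a\subseteq S,\ P_a\cap S\neq\emptyset\}$ and $\{a\in A: R_a\subseteq T,\ P_a\cap T\neq\emptyset\}$, and then use the hypothesis $f(\emptyset)=g(\emptyset)=\emptyset$ to rule out any reaction with $R_a=\emptyset$ in either set, hence establishing their disjointness. Your disjointness argument is phrased slightly more directly (arguing via enabling at $\emptyset$), but the content is identical.
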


\begin{proof}
Suppose $A$ is a reaction system over $S\cup T$ witnessing the $rs$ rank of $f\times g$. By Lemma~\ref{1003a}, $f$ can be specified by the reaction system
	$$A'_S= \{\, (R_a, I_a\cap S, P_a\cap S)\mid a\in A, R_a\subseteq S \text{ and } P_a\cap S\neq \emptyset  \,\}.$$	
Due to symmetry, $g$ can be specified by the reaction system
	$$A'_T= \{\, (R_a, I_a\cap T, P_a\cap T)\mid a\in A, R_a\subseteq T \text{ and } P_a\cap T\neq \emptyset  \,\}.$$	
Since $f(\emptyset)= g(\emptyset)=\emptyset$, the reactant set of every reaction in $A'_S\cup A'_T$ is nonempty. Since $S$ and $T$ are disjoint, it follows that 
$$\{ \, a\in A\mid R_a\subseteq S \text{ and } P_a\cap S\neq \emptyset  \,    \}\cap 
\{ \, a\in A\mid R_a\subseteq T \text{ and } P_a\cap T\neq \emptyset  \,    \}  =\emptyset.$$
Hence, $\vert A\vert \geq \vert A'_S\vert+\vert A'_T\vert$.
Therefore, $\rsrank(f\times g) = \vert A\vert \geq \rsrank(f)+\rsrank(g)$.	
\end{proof}

\begin{corollary}\label{120120a}
Suppose $B$ and $C$  are nondegenerate reaction systems over $S$ and $T$, respectively, where $S\cap T=\emptyset$. Then
$B\cup C$ is a reaction system over $S\cup T$ such that 
	$\rsrank(\res_{B\cup C}) =\rsrank(\res_B)+ \rsrank(\res_C)$.	
\end{corollary}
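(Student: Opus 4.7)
The plan is to reduce this corollary to Theorem~\ref{0903a} by verifying its hypothesis for $f=\res_B$ and $g=\res_C$. First, I would invoke Remark~\ref{2603b} to identify $\res_{B\cup C}$ with the Cartesian product $\res_B \times \res_C$, so that it suffices to prove the equation $\rsrank(\res_B\times \res_C) = \rsrank(\res_B)+\rsrank(\res_C)$.

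Next, I would observe that nondegeneracy is exactly what forces the value on the empty state to be empty. Indeed, if $b\in B$ is any reaction, then nondegeneracy gives $R_b\neq \emptyset$, so $R_b\not\subseteq \emptyset$, which means $b$ is not enabled by the empty set. Summing over all reactions in $B$, we get $\res_B(\emptyset)=\emptyset$, and the symmetric argument gives $\res_C(\emptyset)=\emptyset$.

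With the hypothesis of Theorem~\ref{0903a} verified, I would simply apply it to conclude $\rsrank(\res_B\times \res_C) = \rsrank(\res_B)+\rsrank(\res_C)$, which is the desired equality. The proof is essentially a one-line application; the only thing to be careful about is flagging the correct disjointness assumption $S\cap T=\emptyset$ so that Definition~\ref{260420e} is applicable and so that the union $B\cup C$ genuinely behaves as a reaction system over $S\cup T$ with the product semantics. There is no real obstacle here, since the work has already been done in Theorem~\ref{0903a}; the corollary simply repackages the theorem in the natural language of nondegenerate reaction systems, where the condition $f(\emptyset)=g(\emptyset)=\emptyset$ is automatic.
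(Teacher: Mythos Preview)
Your proposal is correct and matches the paper's intended approach: the corollary is stated immediately after Theorem~\ref{0903a} without an explicit proof precisely because it follows by applying that theorem to $f=\res_B$ and $g=\res_C$, using Remark~\ref{2603b} and the observation that nondegeneracy forces $\res_B(\emptyset)=\res_C(\emptyset)=\emptyset$.
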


There is indeed a lower bound for the $rs$ rank of $f\times g$, namely $\rsrank(f\times g)\geq \rsrank(f)+\rsrank(g)-2$. While this lower bound will not be proved until Theorem~\ref{3103a}, we first illustrate an example showing that this lower bound is valid.

\begin{example}
	Let $S=\{1,2,3\}$ and $T=\{4,5,6\}$. We consider the reaction system $A=\{  (\emptyset, \emptyset, \{1\}), (\{1\},\emptyset, \{2\}), (\emptyset, \{1\},\{3\})\}$ over $S$ and also the reaction system $B=\{  (\emptyset, \emptyset, \{4\}), (\{4\},\emptyset, \{5\}), (\emptyset, \{4\},\{6\})\}$ over $T$. Then 	
	$\rsrank(\res_A)=\rsrank(\res_B)=3$.  However, $A\cup B$ is functionally equivalent to
	$$\{(\{1\},\emptyset,\{2,4\}), (\emptyset,\{1\},\{3,4\}), (\{4\},\emptyset, \{1,5\}), (\emptyset,\{4\},\{1,6\})\}$$ and thus
	$\rsrank(\res_{A\cup B})=\rsrank(\res_A)+\rsrank(\res_B)-2$.	
\end{example}

The following lemma mirrors Lemma~\ref{1003a}. It is due to the observation that $f$ is also embedded in $f\times g$ in the following sense:
$$(f\times g)(X\cup T) =f(X)\cup g(T), \; \text{for all } X\subseteq S.$$
The proof is omitted due to similarity.

\begin{lemma}\label{1003b}
	Suppose $A$ is a reaction system over $S\cup T$ such that 
	$f\times g =\res_{A}$. Then $f$ can be specified by the following reaction system $A'$ over $S$, where
		$$A'= \{\, (R_a \cap S, I_a, P_a\cap S)\mid a\in A, I_a\subseteq S, \text{and } P_a\cap S\neq \emptyset   \,\}.$$	
		\end{lemma}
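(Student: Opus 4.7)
The plan is to mirror the proof of Lemma~\ref{1003a}, exploiting the alternative embedding of $f$ into $f\times g$ indicated in the hint: namely, that $(f\times g)(X\cup T)=f(X)\cup g(T)$ for every $X\subseteq S$. Whereas Lemma~\ref{1003a} fed the input $X\subseteq S$ directly into $\res_A$ (so that the ``noise'' term $g(\emptyset)$ was harmless because $g(\emptyset)\subseteq T$ is disjoint from $S$), here I will evaluate $\res_A$ at the shifted input $X\cup T$. Under this substitution, reactions of $A$ that are enabled are precisely those whose reactant set lies in $X\cup T$ and whose inhibitor set avoids $X\cup T$; the latter condition forces $I_a\cap T=\emptyset$, i.e.\ $I_a\subseteq S\setminus X$, which explains the selection criterion $I_a\subseteq S$ defining $A'$, while the restriction $R_a\cap S$ arises because we want to project the reactant set back to $S$.

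Fix $X\subseteq S$; I want to show $f(X)=\res_{A'}(X)$. For the forward inclusion $\res_{A'}(X)\subseteq f(X)$, take $y\in\res_{A'}(X)$. Then some $a\in A$ with $I_a\subseteq S$ and $P_a\cap S\neq\emptyset$ satisfies $R_a\cap S\subseteq X$, $I_a\cap X=\emptyset$, and $y\in P_a\cap S$. I would verify that $a$ is enabled by $X\cup T$: indeed $R_a=(R_a\cap S)\cup(R_a\cap T)\subseteq X\cup T$, and $I_a\cap(X\cup T)=I_a\cap X=\emptyset$ since $I_a\subseteq S$. Hence $y\in P_a\subseteq\res_A(X\cup T)=f(X)\cup g(T)$, and since $y\in S$ while $g(T)\subseteq T$, we conclude $y\in f(X)$.

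For the reverse inclusion, take $y\in f(X)\subseteq f(X)\cup g(T)=\res_A(X\cup T)$. Then some $a\in A$ is enabled by $X\cup T$ with $y\in P_a$. From enabledness, $I_a\cap(X\cup T)=\emptyset$, which forces $I_a\cap T=\emptyset$, so $I_a\subseteq S$; and $R_a\subseteq X\cup T$ gives $R_a\cap S\subseteq X$. Since $y\in f(X)\subseteq S$, we have $y\in P_a\cap S$, so in particular $P_a\cap S\neq\emptyset$. Thus the reaction $(R_a\cap S, I_a, P_a\cap S)$ lies in $A'$ and is enabled by $X$ (using again $I_a\cap X=\emptyset$), yielding $y\in\res_{A'}(X)$.

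I do not anticipate any real obstacle: the argument is a direct symmetric counterpart to that of Lemma~\ref{1003a}, with the single conceptual substitution ``probe $\res_A$ at $X\cup T$ instead of at $X$.'' The only point that deserves a moment of care is confirming that the definition of $A'$ really produces reactions in $S$, i.e.\ that $(R_a\cap S)\cap I_a=\emptyset$; this follows immediately from $R_a\cap I_a=\emptyset$ in the ambient system $A$.
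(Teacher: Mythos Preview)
Your proof is correct and follows exactly the approach the paper intends: the paper omits the proof of this lemma, noting only that it mirrors Lemma~\ref{1003a} via the alternative embedding $(f\times g)(X\cup T)=f(X)\cup g(T)$, which is precisely the substitution you carry out. The details you give---including the check that $(R_a\cap S)\cap I_a=\emptyset$ so that $A'$ consists of bona fide reactions in $S$---are sound.
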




The following is a technical lemma due to some counting argument.

\begin{lemma}\label{0903c}
Suppose  $A$ is a reaction system over $S\cup T$. Let 
\begin{gather*}
A_{S,r}'= \{\, (R_a, I_a\cap S, P_a\cap S)\mid a\in A, R_a\subseteq S, P_a\cap S\neq \emptyset, \text{and } (R_a, I_a\cap S)\neq (\emptyset, \emptyset)  \,\},\\
A_{T,r}'= \{\, (R_a, I_a\cap T, P_a\cap T)\mid a\in A, R_a\subseteq T, P_a\cap T\neq \emptyset, \text{and } (R_a, I_a\cap T)\neq (\emptyset, \emptyset)  \,\},\\
A_r''= \{ \,a\in A \mid R_a\subseteq S \text{ or } R_a\subseteq T   \,\},\\
A_r'''= \{\,a\in A\mid R_a=\emptyset, I_a\cap S\neq \emptyset, \text{and } I_a\cap T\neq \emptyset \, \}.
\end{gather*}
Then $\vert A_r''\vert+\vert A_r'''\vert \geq \vert A_{S,r}'\vert +\vert A_{T,r}'\vert$.
\end{lemma}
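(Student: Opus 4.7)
The plan is to set up a per-reaction counting argument, comparing the contributions each $a\in A$ makes to the two sides of the inequality. For every $a\in A$, let $c_L(a)$ be the number of sets among $\{A_r'', A_r'''\}$ containing $a$, so that $|A_r''|+|A_r'''|=\sum_{a\in A}c_L(a)$ exactly. For the right-hand side, the map $a\mapsto (R_a, I_a\cap S, P_a\cap S)$ need not be injective on its domain, so I would instead define $c_R(a)$ to be the number of sets in $\{A_{S,r}', A_{T,r}'\}$ whose defining conditions on $a$ are satisfied. Then $|A_{S,r}'|+|A_{T,r}'|\le \sum_{a\in A}c_R(a)$, because each element of a primed set arises from at least one such reaction. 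The lemma will follow once I show $c_L(a)\ge c_R(a)$ for every $a\in A$.

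Next I would split according to the position of $R_a$ relative to $S$ and $T$, into the four cases: (i) $R_a\not\subseteq S$ and $R_a\not\subseteq T$; (ii) $\emptyset\ne R_a\subseteq S$; (iii) $\emptyset\ne R_a\subseteq T$; (iv) $R_a=\emptyset$. In case (i), $a$ lies in none of the four sets, giving $0\ge 0$. In cases (ii) and (iii), the disjointness $S\cap T=\emptyset$ and the nonemptiness of $R_a$ force $\phi_T(a)$ (resp.\ $\phi_S(a)$) to vanish and also prevent $a\in A_r'''$, so both sides contribute at most $1$, while $a\in A_r''$ already contributes exactly $1$ to the left. In case (iv), $R_a\subseteq S$ and $R_a\subseteq T$ both hold, hence $a\in A_r''$ unconditionally; I then sub-split on whether $I_a\cap S$ and $I_a\cap T$ are empty or not, matching each sub-case against the two conditions $(R_a,I_a\cap S)\ne(\emptyset,\emptyset)$ and $(R_a,I_a\cap T)\ne(\emptyset,\emptyset)$ that guard the primed sets.

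Summing $c_L(a)\ge c_R(a)$ over $a\in A$ yields $|A_r''|+|A_r'''|\ge |A_{S,r}'|+|A_{T,r}'|$. The main obstacle is not conceptual but bookkeeping, concentrated in sub-case (iv): when $R_a=\emptyset$ and both $I_a\cap S$, $I_a\cap T$ are nonempty, the left contributes $c_L(a)=2$ via simultaneous membership in $A_r''$ and $A_r'''$, and one must check that this exactly pays for the possible contributions $c_R(a)=2$ to the primed sets. In all other subcases of (iv), either the extra condition $(R_a,I_a\cap S)=(\emptyset,\emptyset)$ or its $T$-analogue kills a contribution on the right, keeping $c_R(a)\le 1=c_L(a)$. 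This alignment between the defining conditions of $A_r'''$ and the exceptional conditions built into $A_{S,r}'$ and $A_{T,r}'$ is precisely what makes the bound tight.
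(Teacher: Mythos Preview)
Your proof is correct and essentially the same as the paper's: the paper defines $B_S=\{a\in A\mid R_a\subseteq S,\ P_a\cap S\ne\emptyset,\ (R_a,I_a\cap S)\ne(\emptyset,\emptyset)\}$ and $B_T$ analogously (so your $c_R(a)=\mathbb{1}[a\in B_S]+\mathbb{1}[a\in B_T]$), then observes $B_S\cup B_T\subseteq A_r''$ and $B_S\cap B_T\subseteq A_r'''$ and applies inclusion--exclusion. Your per-reaction case split is just the element-by-element unpacking of those two containments, with case~(iv) (both $I_a\cap S$ and $I_a\cap T$ nonempty) corresponding exactly to $B_S\cap B_T\subseteq A_r'''$.
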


\begin{proof}
Let $B_S= \{\, a\in A \mid R_a\subseteq S, P_a\cap S\neq \emptyset, \text{and } (R_a, I_a\cap S)\neq (\emptyset, \emptyset)  \,\}$ and $B_T= \{\, a\in A \mid R_a\subseteq T, P_a\cap T\neq \emptyset, \text{and } (R_a, I_a\cap T)\neq (\emptyset, \emptyset)  \,\}$.
Note that $\vert A'_{S,r}\vert \leq  \vert  B_S \vert$ and $\vert A'_{T,r}\vert \leq  \vert B_T\vert $. 
Furthermore, $B_S\cup B_T \subseteq A''_r$ and  $B_S\cap B_T \subseteq A'''_r$. Hence,
$$\vert A''_r\vert \geq \vert B_S\cup B_T\vert = \vert B_S\vert +\vert B_T\vert - \vert B_S\cap B_T\vert \geq \vert A'_{S,r}\vert +\vert A'_{T,r}\vert- \vert A'''_r\vert.$$
Therefore, $\vert A_r''\vert+\vert A_r'''\vert \geq \vert A_{S,r}'\vert +\vert A_{T,r}'\vert$.
\end{proof}

Analogously, we have the following lemma.
	
\begin{lemma}\label{0903d}
Suppose $A$ is a reaction system over $S\cup T$. Let 
\begin{gather*}
A_{S,i}'= \{\, (R_a\cap S, I_a, P_a\cap S)\mid a\in A, I_a\subseteq S, P_a\cap S\neq \emptyset, \text{and } (R_a\cap S, I_a)\neq (\emptyset, \emptyset)  \,\},\\
A_{T,i}'= \{\, (R_a\cap T, I_a, P_a\cap T)\mid a\in A, I_a\subseteq T, P_a\cap T\neq \emptyset, \text{and } (R_a\cap T, I_a)\neq (\emptyset, \emptyset)  \,\},\\
A_i''= \{ \,a\in A \mid I_a\subseteq S \text{ or } I_a\subseteq T   \,\},\\
A_i'''= \{\,a\in A\mid I_a=\emptyset, R_a\cap S\neq \emptyset, \text{and } R_a\cap T\neq \emptyset \, \}.
\end{gather*}
Then $\vert A_i''\vert+\vert A_i'''\vert \geq \vert A_{S,i}'\vert +\vert A_{T,i}'\vert$.
\end{lemma}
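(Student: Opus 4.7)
The plan is to mirror the proof of Lemma~\ref{0903c} essentially verbatim, swapping the roles of reactants and inhibitors throughout. The sets $A'_{S,i}$, $A'_{T,i}$, $A''_i$, $A'''_i$ are obtained from their ``$r$-counterparts'' in Lemma~\ref{0903c} by interchanging $R$ and $I$, with the one natural asymmetry that the degeneracy condition $R_a=\emptyset$ becomes $I_a=\emptyset$ in $A'''_i$. The structural argument should therefore transfer with no new ideas.

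Concretely, I would introduce the two ``source'' subsets of $A$ that project surjectively onto $A'_{S,i}$ and $A'_{T,i}$, namely
\[
B_S = \{\, a\in A \mid I_a\subseteq S,\; P_a\cap S\neq \emptyset,\; (R_a\cap S, I_a)\neq(\emptyset,\emptyset)\,\},
\]
\[
B_T = \{\, a\in A \mid I_a\subseteq T,\; P_a\cap T\neq \emptyset,\; (R_a\cap T, I_a)\neq(\emptyset,\emptyset)\,\}.
\]
The map $a\mapsto (R_a\cap S, I_a, P_a\cap S)$ sends $B_S$ onto $A'_{S,i}$ (and analogously for $T$), so $|A'_{S,i}|\leq |B_S|$ and $|A'_{T,i}|\leq |B_T|$.

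The next step is to verify two set inclusions that feed an inclusion-exclusion count. The inclusion $B_S\cup B_T\subseteq A''_i$ is immediate from the definitions, since every element of $B_S\cup B_T$ satisfies either $I_a\subseteq S$ or $I_a\subseteq T$. For $B_S\cap B_T\subseteq A'''_i$, the key observation is that $I_a\subseteq S$ together with $I_a\subseteq T$ forces $I_a\subseteq S\cap T=\emptyset$, hence $I_a=\emptyset$; once this is known, the nondegeneracy clauses $(R_a\cap S, I_a)\neq(\emptyset,\emptyset)$ and $(R_a\cap T, I_a)\neq(\emptyset,\emptyset)$ collapse to $R_a\cap S\neq\emptyset$ and $R_a\cap T\neq\emptyset$, which is precisely the defining condition of $A'''_i$.

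With these facts in hand, the chain
\[
|A''_i|\;\geq\; |B_S\cup B_T| \;=\; |B_S|+|B_T|-|B_S\cap B_T| \;\geq\; |A'_{S,i}|+|A'_{T,i}|-|A'''_i|
\]
rearranges to the claimed inequality. I do not expect a genuine obstacle here: the argument is structurally identical to that of Lemma~\ref{0903c}. The only step that actually warrants attention is confirming that the ``$I_a=\emptyset$ degeneracy'' interacts correctly with the nondegeneracy clauses built into $B_S$ and $B_T$, which is what makes $B_S\cap B_T$ land inside $A'''_i$ rather than in some larger set.
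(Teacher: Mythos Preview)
Your proposal is correct and is exactly the intended argument: the paper itself does not write out a proof of this lemma, stating only that it is analogous to Lemma~\ref{0903c}, and your sketch is precisely that analogue with reactants and inhibitors interchanged. The one point you flagged as needing care---that $I_a\subseteq S$ and $I_a\subseteq T$ force $I_a=\emptyset$, so the nondegeneracy clauses in $B_S\cap B_T$ reduce to $R_a\cap S\neq\emptyset$ and $R_a\cap T\neq\emptyset$---is indeed the only nontrivial verification, and you handled it correctly.
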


\begin{theorem}\label{3103a}
Suppose $f$ is an $rs$ function over $S$ and suppose $g$ is an $rs$ function over $T$, where $S\cap T=\emptyset$. Then
$$\rsrank(f)+\rsrank(g)-2\leq \rsrank(f\times g)\leq \rsrank(f)+\rsrank(g).$$	
\end{theorem}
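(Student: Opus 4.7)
The upper bound $\rsrank(f \times g) \leq \rsrank(f) + \rsrank(g)$ is already furnished by Lemma~\ref{2603a}, so the remaining task is to prove the lower bound $\rsrank(f \times g) \geq \rsrank(f) + \rsrank(g) - 2$. My plan is to fix any reaction system $A$ over $S \cup T$ with $\res_A = f \times g$ and invoke Lemmas~\ref{1003a}, \ref{1003b}, \ref{0903c}, and~\ref{0903d} in tandem to bound $|A|$ from below.

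The first step is to establish that $|A'_{S,r}|, |A'_{S,i}| \geq \rsrank(f) - 1$ and $|A'_{T,r}|, |A'_{T,i}| \geq \rsrank(g) - 1$. The idea is that the triples in the reaction system $A'_S$ of Lemma~\ref{1003a} whose core is $(\emptyset, \emptyset)$ are always enabled, so all of them can be replaced by a single reaction $(\emptyset, \emptyset, V_S)$ (where $V_S \subseteq S$ is the union of their product sets) without affecting the function specified. Hence $A'_{S,r}$ together with at most one extra reaction still specifies $f$, yielding $|A'_{S,r}| \geq \rsrank(f) - 1$; the other three bounds follow by analogous reasoning, using Lemma~\ref{1003b} for the inhibitor-projected versions.

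Plugging these into Lemmas~\ref{0903c} and~\ref{0903d} gives
$$|A''_r| + |A'''_r| \geq \rsrank(f) + \rsrank(g) - 2 \quad \text{and} \quad |A''_i| + |A'''_i| \geq \rsrank(f) + \rsrank(g) - 2.$$
I would then introduce the auxiliary sets $N_{ST} = \{a \in A : R_a \cap S \neq \emptyset, R_a \cap T \neq \emptyset\}$ and $M_{ST} = \{a \in A : I_a \cap S \neq \emptyset, I_a \cap T \neq \emptyset\}$, and record the elementary identifications $A''_r = A \setminus N_{ST}$, $A''_i = A \setminus M_{ST}$, $A'''_r \subseteq M_{ST}$, and $A'''_i \subseteq N_{ST}$. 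These give $|A''_r| + |A'''_r| \leq |A| + |M_{ST}| - |N_{ST}|$ and $|A''_i| + |A'''_i| \leq |A| + |N_{ST}| - |M_{ST}|$, whose sum is exactly $2|A|$. Combining with the two inequalities above then yields $2|A| \geq 2(\rsrank(f) + \rsrank(g) - 2)$, which is the desired bound.

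The main obstacle is recognising that neither Lemma~\ref{0903c} nor Lemma~\ref{0903d} is strong enough on its own: since $A'''_r \subseteq A''_r$, a direct application of Lemma~\ref{0903c} yields only $|A| \geq (\rsrank(f) + \rsrank(g) - 2)/2$. The key insight is to apply both lemmas simultaneously and exploit the reactant/inhibitor symmetry so that their respective overcounts by $|M_{ST}|$ and $|N_{ST}|$ cancel perfectly in the sum.
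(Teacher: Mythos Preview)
Your proposal is correct and follows essentially the same approach as the paper's proof: both arguments establish $|A'_{S,r}|\ge\rsrank(f)-1$ (and its three analogues) by absorbing the $(\emptyset,\emptyset)$-cored triples into a single reaction, then feed these into Lemmas~\ref{0903c} and~\ref{0903d}, and finally combine the $r$- and $i$-versions using the cross-relation $A'''_r\subseteq A\setminus A''_i$ (equivalently, $A''_i\cap A'''_r=\emptyset$). The only cosmetic difference is that the paper finishes by assuming without loss of generality that $|A'''_r|\ge|A'''_i|$ and reading off $|A|\ge|A''_i|+|A'''_r|\ge|A''_i|+|A'''_i|$, whereas you add the two inequalities and divide by~$2$; these are equivalent manoeuvres.
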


\begin{proof}
Suppose $A$ is a reaction system over $S\cup T$ witnessing the $rs$ rank of $f \times g$. By Lemma~\ref{2603a}, it suffices to show that $\vert A\vert \geq \rsrank(f)+\rsrank(g)-2$.
Let $A_{S,r}', A_{T,r}', A_r'', A_r'''$ and 
$A_{S,i}', A_{T,i}', A_i'', A_i'''$ be defined as in Lemmas~\ref{0903c} and \ref{0903d}. 

First, we show that $\vert A_{S,r}'\vert \geq \rsrank{f}-1$.
Let 
$$A_{S,r}= \{\, (R_a, I_a\cap S, P_a\cap S)\mid a\in A, R_a\subseteq S, \text{and } P_a\cap S\neq \emptyset  \,\}$$
as in Lemma~\ref{1003a} and let $P= \bigcup \{ \,  P_a\cap S\mid a\in A \text{ and } (R_a, I_a\cap S) = (\emptyset, \emptyset)\, \}$. Then it can be observed that $A_{S,r}$ is functionally equivalent to $A_{S,r}' \cup \{ (\emptyset, \emptyset, P)  \} $. By Lemma~\ref{1003a}, it follows that $\vert A'_{S,r}  \cup \{ (\emptyset, \emptyset, P)  \}  \vert =\vert A_{S,r}'\vert +1   \geq \rsrank(f)$. Similarly, it can be shown that $\vert A_{T,r}'\vert \geq \rsrank(g)-1$.

Now, by Lemma~\ref{0903c},
$\vert A_r''\vert+\vert A_r'''\vert \geq \vert A_{S,r}'\vert +\vert A_{T,r}'\vert\geq \rsrank(f) +\rsrank(g)-2$.
Similarly, using Lemmas~\ref{1003b} and \ref{0903d}, it can be shown that $\vert A_i''\vert+\vert A_i'''\vert\geq \rsrank(f) +\rsrank(g)-2$.
Assume $\vert A_r'''\vert\geq  \vert A_i'''\vert$ as the other case is similar.
Note that $A_i''$ and  $A_r'''$ are disjoint.
Therefore, $\vert A\vert \geq \vert A_i''\vert+\vert A_r'''\vert
\geq \vert A_i''\vert+\vert A_i'''\vert \geq \rsrank(f) +\rsrank(g)-2$.
\end{proof}

In view of Theorems~\ref{0903a} and \ref{3103a}, we consider more general conditions such that $\rsrank(f\times g)= \rsrank(f)+\rsrank(g)$ holds.

\begin{theorem}\label{1003d}
Suppose both the following conditions hold:
	\begin{enumerate}
		\item No proper $X\subseteq S$ exists such that $X\subseteq f(Y)$ for all $Y\subseteq S$; 
		 \item No proper $X\subseteq T$ exists such that $X\subseteq g(Y)$ for all $Y\subseteq T$.		
	\end{enumerate}
		Then $\rsrank(f\times g)= \rsrank(f)+\rsrank(g)$.	
\end{theorem}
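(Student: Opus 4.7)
The plan is to mirror the proof of Theorem~\ref{3103a} but to use conditions~(1) and~(2) to tighten each of the four slack bounds appearing there, thereby recovering the two units lost to the $-2$. The upper bound $\rsrank(f\times g)\leq \rsrank(f)+\rsrank(g)$ is already Lemma~\ref{2603a}, so the entire work lies in the reverse inequality.

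Fix a reaction system $A$ over $S\cup T$ witnessing $\rsrank(f\times g)$ and keep the notation $A_{S,r}'$, $A_{T,r}'$, $A_r''$, $A_r'''$ from Lemma~\ref{0903c} together with the inhibitor-side $A_{S,i}'$, $A_{T,i}'$, $A_i''$, $A_i'''$ from Lemma~\ref{0903d}. The crucial observation is that, in the proof of Theorem~\ref{3103a}, the $+1$ appearing in each of $\vert A_{S,r}'\vert + 1\geq \rsrank(f)$ and its three siblings was absorbing a single compensating reaction $(\emptyset,\emptyset,P)$; its product $P$ is the union of $P_a\cap S$ (or $P_a\cap T$) over reactions $a\in A$ whose restricted core is $(\emptyset,\emptyset)$. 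A short enabledness check shows that $P$ is contained in $f(Y)$ (respectively $g(Y)$) for every $Y$: on the $r$-side the contributing reactions have $R_a=\emptyset$ and $I_a$ on the opposite factor, so they are enabled at every $Y\subseteq S$; on the $i$-side, using Lemma~\ref{1003b} with the embedding $X\mapsto X\cup T$, the contributing reactions have $R_a\subseteq T$ and $I_a=\emptyset$, again enabled on every such input. Hence $P$ is a subset of $S$ sitting inside $f(Y)$ for every $Y\subseteq S$, and condition~(1) rules out any nonempty such $P$; symmetrically for $g$ via condition~(2). The four compensating reactions therefore vanish, and we obtain the sharpened
\[
\vert A_{S,r}'\vert \geq \rsrank(f),\quad \vert A_{T,r}'\vert \geq \rsrank(g),\quad \vert A_{S,i}'\vert \geq \rsrank(f),\quad \vert A_{T,i}'\vert \geq \rsrank(g).
\]

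Substituting these into Lemmas~\ref{0903c} and~\ref{0903d} gives both $\vert A_r''\vert + \vert A_r'''\vert \geq \rsrank(f)+\rsrank(g)$ and $\vert A_i''\vert + \vert A_i'''\vert \geq \rsrank(f)+\rsrank(g)$, and the proof then closes with the same disjointness/case split as in Theorem~\ref{3103a}: the sets $A_i''$ and $A_r'''$ are disjoint (any $a\in A_r'''$ has $I_a$ meeting both $S$ and $T$) and similarly $A_r''$ and $A_i'''$, so whichever of $\vert A_r'''\vert$ and $\vert A_i'''\vert$ is the larger yields $\vert A\vert\geq \rsrank(f)+\rsrank(g)$ on the corresponding side.

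The main obstacle I anticipate is the bookkeeping of which reactions of $A$ feed each of the four compensating products and the verification of their unconditional enabledness under the appropriate embedding of $2^S$ or $2^T$ into $2^{S\cup T}$. Once that identification is made explicit, the implication ``condition~(1) or~(2) forces $P=\emptyset$'' is essentially a one-line consequence of the definitions, and everything else is a direct reuse of the machinery already developed for Theorem~\ref{3103a}.
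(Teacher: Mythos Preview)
Your proposal is correct and follows essentially the same approach as the paper's proof. Both arguments use conditions~(1) and~(2) to conclude that the compensating reactions $(\emptyset,\emptyset,P)$ from the proof of Theorem~\ref{3103a} are redundant (equivalently, that $f=\res_{A'_{S,r}}$ and its three analogues hold directly), then feed the resulting sharp inequalities into Lemmas~\ref{0903c} and~\ref{0903d} and close with the same disjointness argument.
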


\begin{proof}
Suppose $A$ is a reaction system over $S\cup T$ witnessing the $rs$ rank of $f \times g$.
Let $A_{S,r}', A_{T,r}', A_r'', A_r'''$ and 
$A_{S,i}', A_{T,i}', A_i'', A_i'''$ be  defined as in Lemmas~\ref{0903c} and \ref{0903d}. 
By Lemma~\ref{1003a} and condition $(1)$, it follows that $f=\res_{A'_{S,r}}$ and thus
$\vert A_{S,r}'\vert \geq \rsrank(f)$. Similarly, it can be shown that
$\vert A_{T,r}'\vert\geq \rsrank(g)$. Hence, 
$\vert A_r''\vert+\vert A_r'''\vert \geq \rsrank(f)+\rsrank(g)$ by Lemma~\ref{0903c}.
Analogously, it can be verified that $\vert A_i''\vert+\vert A_i'''\vert\geq \rsrank(f) +\rsrank(g)$. As in the proof of Theorem~\ref{3103a}, we may assume $\vert A_r'''\vert\geq  \vert A_i'''\vert$.
Hence, $\vert A\vert \geq \vert A_i''\vert+\vert A_r'''\vert
\geq \vert A_i''\vert+\vert A_i'''\vert \geq \rsrank(f) +\rsrank(g)$.
\end{proof}

The sufficient condition in Theorem~\ref{1003d} is not a necessary condition. For example, let $B=\{ (\{1 \}, \emptyset, \{1,2\}), (\emptyset, \{1\}, \{2\})\}$ and $C= \{( \{3\},  \emptyset, \{3\})  \}$. 
Then it can be verified that $\rsrank(\res_{B\cup C}) =3=\rsrank(\res_B)+ \rsrank(\res_C)$. However,
$\{2\}\subseteq \res_{B}(Y)$ for all $Y\subseteq \{1,2\}$.

\section{Irreducible strictly minimal reaction systems}\label{240420a}

From now onwards, we focus our study on strictly minimal reaction systems, of which we first begin with their irreducibility in this section. For this purpose, we introduce the following two conditions:
\begin{itemize}
	\item[A1.] Whenever $a=(\{s\}, \emptyset, P_a)$ and $b=(\emptyset,\{s\}, P_b)$ are reactions in $A$ for some $s\in S$, then $P_a\cap P_b=\emptyset$.
	\item[A2.] If $(\emptyset, \emptyset, P)\in A$, then $P\cap  P_a=\emptyset$ for all $a\in A \backslash \{ (\emptyset, \emptyset, P)   \}$.
\end{itemize}
Conditions A1 and A2 are canonical because one can verify that every strictly minimal reaction over $S$ is functionally equivalent to some strictly minimal reaction system $(S,A)$ satisfying these conditions.

\begin{lemma}\label{161219a}
Let $q\in S$. No two distinct strictly minimal reaction systems $(S,A)$ and $(S,B)$ satisfying conditions A1 and A2 such that $P_a=\{q\}$ for all $a\in A\cup B$ are functionally equivalent.
\end{lemma}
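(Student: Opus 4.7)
The plan is to exploit conditions A1 and A2 to give each admissible reaction system a canonical encoding by a disjoint pair of subsets of $S$ (aside from one exceptional case), and then to observe that $\res_A$ recovers this encoding, forcing $A=B$.

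First, I would extract the structural consequences of the two hypotheses under the constraint $P_a=\{q\}$ for all $a\in A\cup B$. Condition A2 collapses to the statement that whenever $(\emptyset,\emptyset,\{q\})\in A$ one must have $A=\{(\emptyset,\emptyset,\{q\})\}$, because for any other reaction $a$ the intersection $\{q\}\cap P_a=\{q\}$ is nonempty. In the remaining case, strict minimality combined with $P_a=\{q\}$ forces each reaction of $A$ to have exactly the form $(\{s\},\emptyset,\{q\})$ or $(\emptyset,\{s\},\{q\})$ for some $s\in S$, and A1 prohibits both forms from coexisting for the same $s$.

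Next, I would encode such an $A$ (in the non-singleton case) by the pair $(R_A,I_A)$ of disjoint subsets of $S$ defined by $R_A=\{s\in S:(\{s\},\emptyset,\{q\})\in A\}$ and $I_A=\{s\in S:(\emptyset,\{s\},\{q\})\in A\}$; by the previous paragraph this pair determines $A$ uniquely. A direct calculation shows that $\res_A(X)=\{q\}$ precisely when $X\cap R_A\neq\emptyset$ or $I_A\not\subseteq X$, and $\res_A(X)=\emptyset$ otherwise. Hence the annihilator $\mathcal{Z}(A):=\{X\subseteq S:\res_A(X)=\emptyset\}$ coincides with the Boolean interval $[I_A,\,S\setminus R_A]$ in $2^S$, which is nonempty since $R_A\cap I_A=\emptyset$. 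The degenerate case $A=\{(\emptyset,\emptyset,\{q\})\}$ instead gives $\res_A\equiv\{q\}$ and therefore $\mathcal{Z}(A)=\emptyset$.

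Finally, functional equivalence entails $\mathcal{Z}(A)=\mathcal{Z}(B)$. Whether this set is empty separates the degenerate case from the others, so $A$ and $B$ must be of the same type. If both equal $\{(\emptyset,\emptyset,\{q\})\}$ they are identical; otherwise the unique minimum and maximum of the shared interval $[I_A,\,S\setminus R_A]=[I_B,\,S\setminus R_B]$ recover $I_A=I_B$ and $R_A=R_B$, whence $A=B$, contradicting distinctness. There is no genuine obstacle here: conditions A1 and A2 are precisely the tightness requirements making the encoding by $(R_A,I_A)$ both well-defined and collectively exhaustive, after which the uniqueness of interval endpoints in a Boolean lattice closes the argument.
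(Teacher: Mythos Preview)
Your argument is correct. Both your proof and the paper's rest on the same encoding: once $(\emptyset,\emptyset,\{q\})$ is ruled out via condition~A2, any admissible $A$ is determined by the disjoint pair $(R_A,I_A)$, and the task is to recover this pair from $\res_A$. The paper does this by evaluating at carefully chosen test inputs (namely $I_B$ and $\{x\}\cup I_B$) to derive a contradiction from any reaction lying in $A\setminus B$; you instead compute the entire zero set $\mathcal{Z}(A)=\{X:\res_A(X)=\emptyset\}$, identify it as the Boolean interval $[I_A,\,S\setminus R_A]$, and read off $I_A$ and $R_A$ as its unique minimum and maximum. Your formulation is a bit more structural and makes the separation of the degenerate case $A=\{(\emptyset,\emptyset,\{q\})\}$ particularly clean (via $\mathcal{Z}(A)=\emptyset$), whereas the paper's test-set approach is more hands-on and avoids any appeal to lattice properties. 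In substance the two arguments are equivalent: the paper's test input $I_B$ is exactly the minimum of your interval, and its inputs $\{x\}\cup I_B$ probe just outside the interval to detect $R_A\neq R_B$.
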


\begin{proof}
Suppose $(S,A)$ and $(S,B)$ are strictly minimal reaction systems 
as stated in the lemma and they are functionally equivalent. 
	Let $R_B= \{ \, x\in S\mid  (\{x\}, \emptyset, \{q\} )\in B   \}$ and
	$I_B=\{\, y\in S \mid (\emptyset, \{y\}, \{q\} ) \in B\,\}$. By condition A1, $R_B\cap I_B=\emptyset$.
	If $(\emptyset, \emptyset, \{q\})\in A\backslash B$, 
	then $\res_{\mathcal{A}}(I_B)=\{q\}$ but $\res_{\mathcal{B}}(I_B)=\emptyset$, which gives a contradiction. Hence, by condition A2, either $A=B=\{ (\emptyset, \emptyset, \{q\})  \}$ or $(\emptyset, \emptyset, \{q\})\notin A\cup B$. Suppose the latter holds.

Assume $A\nsubseteq B$.
If $(\{x\}, \emptyset, \{q\})\in A\backslash B$ for some $x\in S$, then $\res_{\mathcal{A}}(\{x\}\cup I_B)=\{q\}$ but $\res_{\mathcal{B}}(\{x\}\cup I_B)=\emptyset$. Otherwise, if $(\emptyset, \{y\}, \{q\})\in A\backslash B$ for some $y\in S$, then $\res_{\mathcal{A}}(I_B)=\{q\}$ but $\res_{\mathcal{B}}(I_B)=\emptyset$. In either case, it gives a contradiction. Hence, $A\subseteq B$. Due to symmetry, $B\subseteq A$ and thus $A=B$.	
\end{proof}

\begin{theorem}
	No two distinct strictly minimal reaction systems satisfying conditions A1 and A2 are functionally equivalent.	
\end{theorem}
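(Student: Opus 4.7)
The plan is to reduce the statement to Lemma~\ref{161219a} via a ``slicing'' construction indexed by product elements. For every strictly minimal reaction system $(S,A)$ and every $q\in S$, I would define
$$A_q=\{\,(R_a,I_a,\{q\})\mid a\in A,\; q\in P_a\,\},$$
which is itself a strictly minimal reaction system over $S$, and in which every product set equals $\{q\}$. The first step is the basic slicing identity: for all $X\subseteq S$ and all $q\in S$, one has $q\in\res_A(X)$ iff $q\in\res_{A_q}(X)$, directly from the definitions, since both assertions say that some reaction with $q$ in its product set is enabled by $X$. Because $\res_{A_q}(X)\subseteq\{q\}$, this gives the key equivalence $\res_A=\res_B$ iff $\res_{A_q}=\res_{B_q}$ for every $q\in S$.

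The second step is to verify that whenever $(S,A)$ satisfies A1 and A2, so does $(S,A_q)$. Condition A1 for $A_q$ is forced by A1 for $A$: if $A_q$ contained both $(\{s\},\emptyset,\{q\})$ and $(\emptyset,\{s\},\{q\})$, the two witnessing reactions in $A$ would violate A1 since their product sets share $q$. Condition A2 for $A_q$ requires slightly more care and is the main place the argument could slip: if $(\emptyset,\emptyset,\{q\})\in A_q$, then some reaction $(\emptyset,\emptyset,P)\in A$ has $q\in P$, and A2 for $A$ then forces $q\notin P_b$ for every other $b\in A$. Hence $A_q=\{(\emptyset,\emptyset,\{q\})\}$, and A2 for $A_q$ holds vacuously.

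Putting the pieces together, suppose $(S,A)$ and $(S,B)$ are strictly minimal reaction systems satisfying A1 and A2 with $\res_A=\res_B$. For each $q\in S$, both $A_q$ and $B_q$ satisfy A1 and A2, have every product set equal to $\{q\}$, and specify the same function; Lemma~\ref{161219a} then yields $A_q=B_q$. Finally, I would invoke the standing convention from Section~\ref{240420a} that distinct reactions of a reaction system have distinct cores: this allows $A$ to be recovered as the set of triples $(R,I,P)$ with $P=\{q\in S:(R,I,\{q\})\in A_q\}$ nonempty, and the same formula recovers $B$. Since $A_q=B_q$ for every $q$, this gives $A=B$, contradicting distinctness. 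The main obstacle is the A2 inheritance check; once that is in place, the slicing identity and the core-uniqueness convention do the rest.
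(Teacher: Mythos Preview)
Your proposal is correct and follows essentially the same approach as the paper: slice by product element $q$, apply Lemma~\ref{161219a} to each slice, and reassemble. You have simply supplied the details the paper omits---the slicing identity, the inheritance of A1 and A2 (your treatment of A2 is exactly the point to watch, and you handle it correctly), and the reconstruction of $A$ from the slices via the distinct-cores convention.
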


\begin{proof}
	Suppose $\mathcal{A}=(S,A)$ and $\mathcal{B}=(S,B)$ are two strictly minimal reaction systems 
	satisfying conditions A1 and A2 are functionally equivalent. For each $q\in S$,
	let $A^q =\{ \, (R_a, I_a, \{q\} )\mid a\in A\text{ and } q\in P_a  \,\}$ and let $B^q$ be defined similarly.
	Then $A^q$ and $B^q$ also satisfy conditions A1 and A2 and they are functionally equivalent. Hence, by Lemma~\ref{161219a}, $A^q= B^q$. Since this is true for all $q\in S$, it follows that $A=B$.	
\end{proof}

It is now obvious that the following corollary holds. 

\begin{corollary}\label{191119a}
	Every strictly minimal reaction system satisfying conditions A1 and A2 is irreducible.	
\end{corollary}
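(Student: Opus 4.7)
The plan is to derive the corollary directly from the preceding theorem via a straightforward contrapositive argument. The whole content of the corollary is that the theorem's ``no two functionally equivalent'' statement already rules out every would-be reduction.

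First, I would fix a strictly minimal reaction system $\mathcal{A}=(S,A)$ satisfying conditions A1 and A2, and let $B\subsetneq A$ be an arbitrary proper subset. I then need to verify that $\mathcal{B}=(S,B)$ is itself a strictly minimal reaction system satisfying A1 and A2. Strict minimality is inherited trivially: every reaction in $B$ already lies in $A$, so $|R_a\cup I_a|\le 1$ remains in force. Conditions A1 and A2 are likewise inherited, because both conditions are universally quantified over pairs (or single reactions) drawn from the reaction set, and restricting to a smaller set of reactions cannot introduce new violations.

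Next, since $B\subsetneq A$, the two reaction systems $\mathcal{A}$ and $\mathcal{B}$ are distinct strictly minimal reaction systems over $S$ that both satisfy A1 and A2. Invoking the preceding theorem, they cannot be functionally equivalent, so $\res_A\neq \res_B$. As this holds for every proper subset $B$ of $A$, the definition of irreducibility is satisfied and $\mathcal{A}$ is irreducible.

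There is no real obstacle here; the only subtlety worth double-checking is the hereditary nature of A1 and A2 under taking subsets of reactions, which is immediate from the way those conditions are phrased. Thus the proof reduces to a single appeal to the theorem and can be written in just a couple of lines.
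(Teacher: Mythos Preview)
Your argument is correct and is exactly the reasoning the paper has in mind: the corollary is stated immediately after the theorem with only the remark that it is ``now obvious,'' and the obvious derivation is precisely the one you give (subsets of $A$ inherit strict minimality and conditions A1, A2, so the theorem forces $\res_B\neq\res_A$ for every proper $B\subsetneq A$). Your check that A1 and A2 are hereditary under passing to subsets is the only point worth noting, and you have handled it correctly.
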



As a further corollary, we address the cardinalities of irreducible strictly minimal reaction systems.

\begin{corollary}	
	Suppose $\vert S\vert \geq 3$. The largest irreducible strictly minimal reaction system over $S$ has cardinality $2\vert S\vert+1$.	
\end{corollary}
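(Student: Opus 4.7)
The plan is to establish a matching upper bound and construction. For the upper bound, observe that in a strictly minimal reaction system each core $(R_a, I_a)$ satisfies $\vert R_a \cup I_a\vert \leq 1$, so it must be one of the $2\vert S\vert + 1$ possibilities: the empty core $(\emptyset, \emptyset)$, the $\vert S\vert$ cores $(\{s\}, \emptyset)$, and the $\vert S\vert$ cores $(\emptyset, \{s\})$ for $s \in S$. Under the standing convention adopted from Section~\ref{240420a} that distinct reactions share no common core, this immediately caps the size of any strictly minimal reaction system over $S$ at $2\vert S\vert + 1$, regardless of whether it is irreducible.

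For the lower bound I will exhibit an irreducible strictly minimal reaction system attaining cardinality $2\vert S\vert + 1$. Since $\vert S\vert \geq 3$, pick three pairwise distinct elements $p, q, r \in S$ and set
$$A = \{(\emptyset, \emptyset, \{p\})\} \cup \{(\{s\}, \emptyset, \{q\}) : s \in S\} \cup \{(\emptyset, \{s\}, \{r\}) : s \in S\}.$$
Then $\vert A\vert = 2\vert S\vert + 1$, all $2\vert S\vert + 1$ cores are realised exactly once, and $\vert R_a \cup I_a\vert \leq 1$ throughout, so $(S, A)$ is strictly minimal. The only verification required is that $A$ meets the canonical conditions A1 and A2 from Section~\ref{240420a}: condition A1 reduces to $\{q\} \cap \{r\} = \emptyset$, while condition A2 reduces to $\{p\} \cap \{q\} = \{p\} \cap \{r\} = \emptyset$, both of which follow immediately from the pairwise distinctness of $p, q, r$. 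Irreducibility of $(S, A)$ then follows from Corollary~\ref{191119a}.

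There is no substantive obstacle here; the hypothesis $\vert S\vert \geq 3$ plays exactly one role, namely to guarantee the existence of three distinct singleton product sets so that A1 and A2 can be enforced simultaneously across the full roster of $2\vert S\vert + 1$ cores. With fewer than three available elements, some product singleton would have to coincide with another, and this particular construction would lose either A1 or A2.
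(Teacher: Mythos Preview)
Your proof is correct and follows essentially the same approach as the paper: the upper bound comes from counting the $2\vert S\vert+1$ possible cores under the distinct-core convention, and the lower bound is witnessed by exactly the same construction (the paper uses $1,2,3$ where you use $p,q,r$), with irreducibility supplied by Corollary~\ref{191119a} after checking conditions A1 and A2.
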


\begin{proof}
Let $S=\{1,2,3, \dotsc\}$. Every  strictly minimal reaction system over $S$ has cardinality at most $2\vert S\vert+1$ due to our convention. Consider the strictly minimal reaction system satisfying conditions A1 and A2:
$$A=\{(\emptyset, \emptyset, \{1\})  \}\cup  \{\, (\{s\}, \emptyset, \{2\}) \mid s\in S \,\}     \cup \{\, (\emptyset,\{s\}, \{3\}) \mid s\in S \,\}.$$
 By Corollary~\ref{191119a}, $A$ is irreducible.  
\end{proof}

\section{Reaction system rank of strictly minimal reaction systems}

In this section, we initiate the study on the following question.

\begin{question}\label{2003a}
	What is the largest reaction system rank attainable by an rs function specified by a  strictly minimal reaction systems over $S$? 
\end{question}

\begin{example}
	The set $\{ (\{1\},\emptyset, \{1\}), (\{2\}, \emptyset, \{2\}), (\emptyset, \{1\},\{2\}), (\emptyset, \{2\},\{1\})     \}$ of reactions is functionally equivalent to 
	$\{ (\{1\},\emptyset, \{1\}), (\{2\},\emptyset, \{2\}), (\emptyset,\{1,2\},\{1,2\})  \}$. 
\end{example}

In fact, for the binary case, the answer to Question~\ref{2003a} is three. This can be easily verified as the number of cases is small. For a general background set $S$, the corresponding largest $rs$ rank is bounded by $2\vert S\vert$, as the reaction with core being $(\emptyset, \emptyset)$ can be absorbed by other reactions.
We will see in this section that for the ternary case, the answer is five instead of six but the quaternary case will be resolved only in Section~\ref{250420b}. Before that, we first establish some preliminary results  that allow reduction of the number of cases to be considered.


\begin{remark}\label{200420d}
Suppose $A$ is a reaction system over $S$ and $\sigma$ is a permutation on $S$.
Let $B= \{\,(R_a, I_a, \sigma[P_a] ) \mid a\in A \,\}$. Then $\rsrank(\res_B)= \rsrank(\res_A)$.
\end{remark}


\begin{lemma}\label{260420c}
Suppose  $A$ is a strictly minimal reaction system over $S$ and $q\in S$.
Suppose $(\{q\}, \emptyset, P)\in A$ and $(\emptyset, \{q\}, Q)\in A$ for some $P$ and $Q$. Let 
$$B= A\backslash \{( \{q\}, \emptyset, P), (\emptyset, \{q\}, Q) \} \cup \{( \{q\}, \emptyset, P), (\emptyset, \{q\}, Q) \}.$$ 
Then $\rsrank(\res_B)=\rsrank(\res_A)$.
\end{lemma}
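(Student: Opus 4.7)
The first order of business is to interpret $B$ correctly: the displayed definition of $B$ appears to contain a typographical duplication, and for the lemma to have content I read $B$ as the system obtained from $A$ by swapping the two products, so that $B = A \setminus \{(\{q\}, \emptyset, P), (\emptyset, \{q\}, Q)\} \cup \{(\{q\}, \emptyset, Q), (\emptyset, \{q\}, P)\}$. Granting this reading, my plan is to exhibit, given any reaction system $\mathcal{C} = (S, C)$ with $\res_C = \res_A$, another reaction system $\mathcal{D} = (S, D)$ with $\res_D = \res_B$ and $|D| = |C|$, and then to invoke the evident symmetry (the swap operation is its own inverse) for the reverse inequality.

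The technical backbone is the involution $\tau : 2^S \to 2^S$ given by $\tau(X) = X \triangle \{q\}$. Strict minimality of $A$ together with the standing convention that distinct reactions have distinct cores forces the two distinguished reactions of $A$ to be the only ones whose resource involves $q$; hence, writing $A_0$ for $A$ with those two reactions removed, $\res_{A_0}$ is indifferent to the presence of $q$, and a short case split on whether $q \in X$ yields $\res_B(X) = \res_A(\tau(X))$ for every $X \subseteq S$. To transport $C$ to a set $D$ of equal size with $\res_D = \res_B$, I would define a reaction-level map $c = (R_c, I_c, P_c) \mapsto d$ that simply toggles $q$ between the reactant and inhibitor sets: move $q$ from $R_c$ to $I_c$ if $q \in R_c$; from $I_c$ to $R_c$ if $q \in I_c$; and otherwise leave $c$ unchanged. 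Disjointness of the new reactant and inhibitor sets is immediate, and a routine three-case check confirms that $c$ is enabled by $\tau(X)$ if and only if $d$ is enabled by $X$; consequently $\res_D(X) = \res_C(\tau(X)) = \res_A(\tau(X)) = \res_B(X)$.

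The map $c \mapsto d$ is an involution on the collection of all reactions over $S$, so it is injective, whence $|D| = |C|$ and no two distinct reactions of $C$ can collapse. Taking $C$ to witness $\rsrank(\res_A)$ gives $\rsrank(\res_B) \leq \rsrank(\res_A)$, and applying the same construction with $A$ and $B$ interchanged closes the argument. I do not foresee a substantive obstacle: the conceptual content is the strict-minimality observation pinning down $\res_B = \res_A \circ \tau$, while the rest is enabledness bookkeeping under the three exhaustive cases on where $q$ sits in the core of a reaction of $C$.
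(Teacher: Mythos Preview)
Your reading of the typo is correct, and your argument is essentially the paper's own: both define the same reaction-level toggle of $q$ between reactant and inhibitor sets, observe that it intertwines with the involution $\tau(X)=X\triangle\{q\}$ at the level of $\res$, apply it to a witness $C$ for $\rsrank(\res_A)$ to obtain an equally sized $D$ specifying $\res_B$, and finish by symmetry. The only cosmetic difference is that you establish $\res_B=\res_A\circ\tau$ directly from strict minimality before introducing the toggle, whereas the paper obtains it as the special case $C=A$ of the general toggle (noting that $D$ then coincides with $B$).
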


\begin{proof}
First, we make a general observation. Suppose $C$ is any reaction system over $S$. Let
\begin{multline*}
D= \{\, c\in C  \mid  q\notin R_c\cup I_c  \, \}  \cup \{ \, (R_c \backslash \{q\}, I_c \cup \{q\}, P_c)\mid c\in C \text{ and } q\in R_c\,   \}  \\ \cup \{\, (R_c\cup \{q\}, I_c\backslash \{q\}, P_c)\mid c\in C\text{ and } q\in I_c  \,\}.
\end{multline*}
Then it can be verified that for all $ X\subseteq S$, we have
$$\res_D(X)=\begin{cases}
\res_C(X\backslash \{q\}) &\text{ if } q\in X\\
\res_C(X\cup \{q\}) &\text{ if } q\notin X.
\end{cases}
$$
Note that if $C$ is taken to be $A$, then $D$ becomes $B$ and vice versa. 
Subsequently, suppose $C$ is any reaction system that witnesses the $rs$ rank of $\res_A$. It follows that
$D$ as defined above is functionally equivalent to $B$. Therefore,
$\rsrank(\res_B) \leq \rsrank(\res_A)$. Vice versa, $\rsrank(\res_A) \leq \rsrank(\res_B)$.
\end{proof}


For convenience, we introduce some simplification convention. For example, let $S=\{1,2,3\}$.
We would drop the braces and represent the subsets of $S$ as $\emptyset$,  $1$, $2$, $3$, $12$, $13$, $23$, $123$.
Hence, the reaction $(\{1,2\}, \emptyset, \{3\})$ will be simplified to $(12,\emptyset,3)$.
If $f$ is an $rs$ function over $S$ and $f(\{1,3\})= \{2,3\}$, we will write this as $f(13)= 23$. 

\begin{theorem}\label{2903a}
For $\vert S\vert=3$, the largest reaction system rank attainable by an $rs$ function specified by a strictly minimal reaction systems over $S$ is five.
\end{theorem}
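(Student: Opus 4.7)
The plan is to prove the theorem by establishing matching bounds of five.

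For the lower bound, I will exhibit a concrete strictly minimal reaction system $\mathcal{A}_0$ over $S=\{1,2,3\}$ with $\rsrank(\res_{\mathcal{A}_0})=5$. A natural cyclically symmetric candidate is
\[
\mathcal{A}_0=\bigl\{(1,\emptyset,2),(2,\emptyset,3),(3,\emptyset,1),(\emptyset,1,3),(\emptyset,2,1),(\emptyset,3,2)\bigr\},
\]
for which I tabulate $f:=\res_{\mathcal{A}_0}$ on the eight subsets of $S$ and verify that each fibre $F_q=\{X:q\in f(X)\}$ has size $6$. The lower bound $\rsrank(f)\geq 5$ follows from a supply-versus-demand argument: each candidate reaction $(R,I,P)$ fires on an interval $[R,S\setminus I]\subseteq 2^S$ that, for every $q\in P$, must lie inside $F_q$, so its supply $|P|\cdot|[R,S\setminus I]|$ is bounded by $|P|$ times the maximum interval size in $\bigcap_{q\in P}F_q$. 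For this specific $f$, the pairwise intersections $F_q\cap F_{q'}$ admit only intervals of size at most $2$, and $F_1\cap F_2\cap F_3$ only intervals of size at most $1$, so every reaction contributes supply at most $4$, while the total demand is $\sum_{q\in S}|F_q|=18$. Four reactions therefore supply at most $16<18$, ruling out $\rsrank(f)\leq 4$. The matching upper bound $\rsrank(f)\leq 5$ is produced by an explicit five-reaction witness using non-strictly-minimal reactions with two-element inhibitor or reactant sets, constructed to reuse size-two intervals lying in two fibres at once.

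For the upper bound on arbitrary strictly minimal $\mathcal{A}$, I follow the canonicalization pattern of Section~\ref{240420a}. After imposing Conditions A1 and A2, any reaction $(\emptyset,\emptyset,P_0)$ that appears is absorbed by unioning $P_0$ into the products of every other reaction, producing an equivalent strictly minimal system of size at most $2|S|=6$ whose reactions all have single-element cores. If at most five reactions remain we are done; otherwise exactly six remain, of the form $\{(\{i\},\emptyset,P_i^+),(\emptyset,\{i\},P_i^-):i=1,2,3\}$ with each $P_i^\pm$ nonempty and $P_i^+\cap P_i^-=\emptyset$. I then use Remark~\ref{200420d} (permutations of $S$ on products preserve $\rsrank$) together with Lemma~\ref{260420c} (swapping $P_i^+\leftrightarrow P_i^-$ preserves $\rsrank$) to reduce the space of admissible six-tuples $(P_i^\pm)$ modulo these symmetries to a short list of orbit representatives. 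For each representative I construct an explicit functionally equivalent reaction system of size five, typically by replacing a suitable pair of single-resource reactions with one non-strictly-minimal reaction whose firing pattern on the eight subsets of $S$ matches the joint pattern of the replaced pair.

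The hardest part will be the case analysis on orbit representatives: although the symmetries cut the configuration space substantially, several representatives remain and for each one the size-five replacement must be produced and then verified against $\res$ at every subset of $S$. The lower-bound counting argument, by contrast, is comparatively direct once the constraint on interval sizes inside the pairwise and triple intersections of the fibres of $\mathcal{A}_0$ has been recorded.
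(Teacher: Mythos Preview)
Your plan coincides with the paper's proof. Your $\mathcal{A}_0$ is the paper's $\mathcal{A}_{(1\,2\,3)}$ with the cycle $(1\,2\,3)$ applied to all product sets, hence has the same rank by Remark~\ref{200420d}; your supply-versus-demand count (maximum supply $4$ per compatible reaction against total demand $18$) is exactly the proof of Theorem~\ref{260420b}. The upper-bound strategy---normalize, drop to at most six single-core reactions, reduce modulo the symmetries of Remark~\ref{200420d} and Lemma~\ref{260420c}, and exhibit a five-reaction witness for each orbit representative---is precisely the paper's Case~1/Case~2 analysis.

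One step needs repair. If the A1/A2-normalized system carries all seven reactions, absorbing $(\emptyset,\emptyset,P_0)$ into the six single-core products yields new products $P_i^\pm\cup P_0$ with $\bigl(P_i^+\cup P_0\bigr)\cap\bigl(P_i^-\cup P_0\bigr)=P_0\neq\emptyset$, so your assertion that the resulting six reactions still satisfy $P_i^+\cap P_i^-=\emptyset$ fails in exactly this case, and your orbit enumeration no longer covers it. The paper avoids this by reversing the order: it first proves $\rsrank\leq 5$ for every six-reaction system satisfying A1 with no empty-core reaction, and only then observes that if $C$ witnesses that bound, then $\{(R_c,I_c,P_c\cup P_0):c\in C\}$ specifies the original seven-reaction function. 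Swap your absorption step and your case analysis and the argument goes through.
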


\begin{proof}
Let $S=\{1,2,3\}$. Suppose $A$ is a strictly minimal reaction systems of cardinality exactly six containing
no reaction with core being $(\emptyset, \emptyset)$ such that condition A1 is satisfied. It suffices to show that $\rsrank(A)\leq 5$  as the upper bound is attainable by Theorem~\ref{260420b}.  This is because every strictly minimal reaction system $B$ of cardinality seven can be associated to such a strictly minimal reaction system $A$ and a set $P_0$ such that $\res_B(X) = \res_A(X)\cup P_0$ for all $X\subseteq S$. Hence, if $C$ witnesses the $rs$ rank of $\res_A$, then $\res_B$ can be specified by the reaction system $\{ \,( R_c, I_c, P_c\cup P_0) \mid c\in C \, \}$. Let $f=\res_A$.   

\setcounter{case}{0}

\begin{case}
$\vert P_a\vert =2$ for some $a\in A$.
\end{case}

Because of Lemma~\ref{260420c} and Remark~\ref{200420d}, without loss of generality, we may assume $\{( 1,\emptyset, 1), (\emptyset,1, 23)  \}\subseteq A$ (up to relabelling the elements of $S$).
We now construct a set $B$ of reactions with cardinality five that can specify $f$. First of all, put $ (\emptyset,1, 23)\in B$. Since $1\in f(1)$ and $23\subseteq f(\emptyset)$, we just require an additional specific reaction in $B$ to ensure that $\res_B(\emptyset)= f(\emptyset)$ and $\res_B (1)= f(1)$, namely, if $1\in f(\emptyset)$ then that reaction is $(\emptyset, 23, f(1))$ and otherwise, if 
$1\notin f(\emptyset)$ then that reaction is $(1, 23, f(1))$. Similarly, 
since $1\in f(12)$ and $23\subseteq f(2)$, if $1\in f(2)$ then put $(2, 3, f(12))\in B$ and otherwise, if $1\notin f(2)$ then put $(12, 3, f(12))\in B$. Similarly, we take care of the pairs
$( f(3), f(13))$ and $(f(23), f(123))$.

\begin{case}
$\vert P_a\vert =1$ for all $a\in A$.
\end{case}

By Lemma~\ref{260420c} and Remark~\ref{200420d}, without loss of generality, we may assume $A$ is one of the following.

\begin{subcase}
$A= \{ (1, \emptyset,1), (2,\emptyset, 1), (3, \emptyset,1), (\emptyset, 1, 2), (\emptyset,2,2), (\emptyset, 3, 2) \}$.
\end{subcase}

Then $f$ can be specified by $\{ (\emptyset,123, 2), (123,\emptyset, 1), (1,3,12), (2, 1, 12), (3,2,12)\}$.

\begin{subcase}
$A= \{ (1, \emptyset,1), (2,\emptyset, 2), (3, \emptyset,3), (\emptyset, 1, 2), (\emptyset,2,3), (\emptyset, 3, 1)\}$.
\end{subcase}

Then in fact $\rsrank(f)=5$ by Theorem~\ref{260420b}.

\begin{subcase}
$A= \{ (1, \emptyset,1), (2, \emptyset, 1), (3, \emptyset,1), (\emptyset, 1, 2), (\emptyset,2,2), (\emptyset, 3, 3)\}$.
\end{subcase}

Then $f$ can be specified by $\{ (1, \emptyset,1), (2, 1, 12), (3, \emptyset,1), (\emptyset,2,2), (\emptyset, 3, 3)\}$.
\end{proof}

For the rest of this section, we would like to point an interesting connection of Question~\ref{2003a} to the hypercube graphs. Consider a three-dimensional cube with three of the vertices being $(1,0,0)$, $(0,1,0)$, and $(0,0,1)$.
Its vertices and edges form the cube graph. For $S=\{1,2,3\}$, the well known isomorphism between the Hasse Diagram of $\mathcal{P}(S)$ and the cube graph allows us to identify the elements of $\mathcal{P}(S)$  with the vertices of the cube graph as follows:
\begin{align*}
\emptyset \leftrightarrow (0,0,0), & &\{1\} \leftrightarrow (1,0,0), & &\{2\}\leftrightarrow (0,1,0), & &\{3\} \leftrightarrow (0,0,1),\\
\{1,2\} \leftrightarrow (1,1,0), & &\{1,3\}  \leftrightarrow (1,0,1), & & \{2,3\}\leftrightarrow (0,1,1), & &\{1,2,3\} \leftrightarrow (1,1,1).
\end{align*}

We assign elements of $S=\{1,2,3\}$ as labels to the vertices of the cube graph according to the labelling operations defined as follows.

\begin{enumerate}
\item (Vertex) Assign one or more elements of $S$ as labels to a chosen vertex.
\item (Edge) Assign the same element(s) of $S$ as a label(s) to both end vertices of a chosen edge.
\item (Face) Assign the same element(s) of $S$ as a label(s) to all four vertices of a chosen face.
\end{enumerate}

Theorem~\ref{2903a} is in fact equivalent to the following property regarding the above labelling operations  on the cube graph. 

\begin{theorem}
If the vertices of the cube graph is labelled by subjecting every face to a labelling operation (3) independently but simultaneously, then at most five labelling operations is needed (in any combination) to obtain the same labels on the vertices.
\end{theorem}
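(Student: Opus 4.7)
The plan is to recognize this theorem as a direct translation of Theorem~\ref{2903a} via the natural bijection between reactions over $S=\{1,2,3\}$ and the three labelling operations on the cube graph. First, I would verify that the set of $X\subseteq S$ that enables a reaction $a=(R_a,I_a,P_a)$ forms a subcube of dimension $3-\vert R_a\cup I_a\vert$ in the Hasse diagram of $2^S$. Under the identification between the Hasse diagram and the cube graph spelled out just before the theorem, this means that a reaction with one resource corresponds to face operation~(3), one with two resources to edge operation~(2), and one with three resources to vertex operation~(1); in each case the label equals the product set $P_a$.

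Next, I would observe that, after applying a collection of operations originating from the reactions of some reaction system $A$, the label accumulated on the vertex corresponding to $X$ is exactly $\bigcup_{a\in A,\, X\text{ enables }a}P_a=\res_A(X)$. Hence two labellings of the cube coincide on every vertex if and only if the corresponding reaction systems are functionally equivalent, and the minimum number of operations needed to reproduce a given labelling coincides with the $rs$ rank of the underlying $rs$ function.

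The initial configuration described in the hypothesis, obtained by independently and simultaneously assigning a label (possibly empty) to each of the six faces, therefore encodes a strictly minimal reaction system with at most six reactions, each of core $(\{s\},\emptyset)$ or $(\emptyset,\{s\})$ for some $s\in S$, and with no reaction of core $(\emptyset,\emptyset)$; faces assigned the empty label simply contribute no reaction. By Theorem~\ref{2903a}, the $rs$ function specified by this reaction system admits a functionally equivalent specification by at most five reactions, which under the bijection translates into at most five labelling operations, in any combination of the three types, producing the same labels on all eight vertices.

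The only care needed is in verifying the dimension count for enabling sets and confirming that operation types~(1),~(2),~(3) are exactly matched with reactions having three, two, and one resources, respectively; once this correspondence is in place, no further combinatorics is required and the conclusion follows immediately from Theorem~\ref{2903a}.
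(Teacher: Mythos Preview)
Your approach is exactly the paper's: the paper gives no separate proof of this theorem but states that it ``is in fact equivalent to'' Theorem~\ref{2903a}, and you are simply making that equivalence explicit via the subcube/reaction correspondence and then invoking Theorem~\ref{2903a}.

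One point deserves a little more care. Your bijection matches labelling operations (1), (2), (3) with reactions having three, two, and one resources respectively, but a reaction with core $(\emptyset,\emptyset)$ corresponds to none of the listed operations. Theorem~\ref{2903a} as stated only guarantees that the $rs$ rank is at most five; a priori a size-five witness could contain a reaction $(\emptyset,\emptyset,P)$, which would not translate back into any of the operations (1)--(3). To close this gap you should observe that the explicit witnesses constructed in the proof of Theorem~\ref{2903a} consist entirely of reactions with at least one resource (this is immediate in Cases~2.1--2.3, in Case~1 by inspection of the reactions placed in $B$, and in the initial reduction step since absorbing $P_0$ into the product sets leaves the cores of $C$ unchanged). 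Hence a functionally equivalent system of at most five reactions \emph{of the required form} always exists, and your translation goes through. With this remark added, the argument is complete and matches the paper's intended reading.
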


For the quaternary alphabet, the corresponding graph is called the tesseract graph. 
However, in this case, we will see later that the largest possible reaction system rank of an $rs$ function specified by a strictly minimal reaction systems over $S$ is eight instead of seven, even when the product sets are restricted to singletons. 

\section{Strictly minimal reaction systems induced by permutations}\label{250420a}

In order to answer Question~\ref{2003a}, we focus on strictly minimal reaction systems induced by permutations defined as follows.

\begin{definition}
Suppose $\sigma \in \Sym(S)$. Let $A_\sigma$ denote the set of reactions
$$A_\sigma=\{ \, (\{s\},\emptyset, \{s\} )\mid s\in S\,\} \cup \{\, (\emptyset, \{s\}, \{ \sigma(s)  \}   )\mid s\in S  \,  \}.$$
We call $\mathcal{A}_\sigma=(S, A_\sigma)$ the \emph{strictly minimal reaction system induced by $\sigma$}. 
\end{definition}

Suppose $\sigma\in \Sym(S)$. Since we are interested in the largest possible reaction system rank of the $rs$ function specified by $\mathcal{A}_\sigma$, we may assume $\sigma$ has no fixed points. Consider the disjoint cycle decomposition of $\sigma= \sigma_1\circ \sigma_2 \circ\dotsb \circ \sigma_l$. By our assumption, each cycle $\sigma_i$ has length at least two. 

For each $1\leq i\leq l$, let $S_i$ denote the set of elements that is moved by the cycle $\sigma_i$. Let $\mathcal{B}_{\sigma_i}$ denote the strictly minimal reaction systems over $S_i$ induced by $\sigma_i$. 
Note that no proper $X\subseteq S_i$ exists such that $X\subseteq \res_{ \mathcal{B}_{\sigma_i}} (Y)$ for all $Y\subseteq S_i$. By means of Remark~\ref{2603b} and Theorem~\ref{1003d},
it follows that $\rsrank( \res_{\mathcal{A}_\sigma } ) = \sum_{i=1}^l  \rsrank(\res_{ \mathcal{B}_{\sigma_i}   }   ) $. Therefore, it suffices to study reaction system rank of the function over $S$ specified by $\mathcal{A}_{\sigma}$ where
$\sigma$ is a full cycle of length $\vert S\vert$.

From now onwards, due to isomorphism, we can further restrict our attention to  $\mathcal{A}_{(1 \, 2\, 3\, \dotsm\, n)}$ and it is understood that $S=\{1,2,\dotsc, n  \} $.
One can also easily verify that the $rs$ rank of the function specified by $\mathcal{A}_{(12)}$ is three.
For the ternary alphabet, we will show in this section that the corresponding $rs$ rank of is five. Before that, we introduce a way to filter out reactions that are maximal in some sense with respect to a given $rs$ function.  This provide us a more efficient way to study reaction system rank as the list of pertinent reactions is shorter.

\begin{definition}\label{070420a}
Suppose $f$ is an $rs$ function over $S$ and $a$ is a reaction over $S$. 
We say that $a$ is \emph{$f$-compatible} if $P_a\subseteq f(X)$ whenever $X$ is enabled by $a$.
  We say that an $f$-compatible $a$ is \emph{maximally $f$-compatible} if no distinct $f$-compatible $b$ exists such that $P_a\subseteq P_b$ and $X$ is enabled by $b$ whenever $X$ is enabled by $a$.
\end{definition}

The following proposition explains the rationale of Definition~\ref{070420a}.

\begin{proposition}\label{290420b}
Suppose $f$ is an $rs$ function over $S$. Then there is a reaction system witnessing the $rs$ rank of $f$ consisting of maximally $f$-compatible reactions.
\end{proposition}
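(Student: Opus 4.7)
The plan is to start from any reaction system $A$ witnessing the $rs$ rank of $f$ and replace each reaction by a maximally $f$-compatible extension, then check that the result still specifies $f$ and has the same cardinality.

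First, I would observe that any reaction $a$ in a reaction system $A$ with $\res_A=f$ is automatically $f$-compatible: if $X$ enables $a$, then $P_a\subseteq \res_A(X)=f(X)$. Next, I would introduce the partial order $\preceq$ on $f$-compatible reactions defined by $a\preceq b$ iff $P_a\subseteq P_b$ and every set enabling $a$ also enables $b$; the latter is easily seen to be equivalent to the conjunction $R_b\subseteq R_a$ and $I_b\subseteq I_a$. Since $S$ is finite, the collection of $f$-compatible reactions over $S$ is finite, so for every $f$-compatible $a$ there exists a $\preceq$-maximal $b_a\succeq a$, which is exactly a maximally $f$-compatible reaction in the sense of Definition~\ref{070420a}.

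Now fix a reaction system $A$ with $\res_A=f$ and $\vert A\vert=\rsrank(f)$, and set $A'=\{\,b_a\mid a\in A\,\}$. I would verify both inclusions in $\res_{A'}=f$. The inclusion $\res_{A'}(X)\subseteq f(X)$ follows because each $b_a$ is $f$-compatible. For the reverse inclusion, whenever $X$ enables $a$ it also enables $b_a$ (since $a\preceq b_a$) and $P_a\subseteq P_{b_a}$, so $f(X)=\res_A(X)\subseteq \res_{A'}(X)$. Since $\vert A'\vert\leq \vert A\vert=\rsrank(f)$ and $\res_{A'}=f$, we conclude $\vert A'\vert=\rsrank(f)$, so $A'$ witnesses the rank while consisting entirely of maximally $f$-compatible reactions.

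There is no substantial obstacle here; the argument is essentially an extension-to-maximum procedure made trivial by the finiteness of $S$, so no appeal to Zorn is needed. The only conceptual point worth stressing is that passing from $a$ to a $\preceq$-larger reaction can only shrink the reactant and inhibitor sets and enlarge the product set, hence can only augment the union that defines $\res$, while $f$-compatibility guarantees this augmentation never escapes $f$.
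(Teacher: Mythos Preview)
Your proof is correct and follows essentially the same approach as the paper: start from a rank-witnessing system, replace each reaction by a maximally $f$-compatible one dominating it, and observe that the resulting system still specifies $f$ without increasing in size. Your version is simply more explicit than the paper's terse sketch---you spell out the partial order, verify both inclusions for $\res_{A'}=f$, and handle the cardinality via $\lvert A'\rvert\le\lvert A\rvert$ combined with minimality---whereas the paper just writes ``we can just consider $(A\setminus\{a\})\cup\{b\}$'' and leaves these checks implicit.
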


\begin{proof}
Suppose $A$ is a reaction system witnessing the $rs$ rank of $f$. By definition, every reaction in $A$ is $f$-compatible. Suppose some $a\in A$ is not maximally $f$-compatible. Then 
$P_a\subseteq P_b$ and $X$ is enabled by $b$ whenever $X$ is enabled by $a$ for some maximally $f$-compatible reaction $b$. Hence, we can just consider $(A\backslash \{a\}) \cup \{b\}$.
\end{proof}

\begin{figure*}
\centering
\begin{tikzpicture}[scale=0.6]
     \node (123) at (0,3) {$123$};
   \node (12) at (-2,1) {$12$};
    \node (13) at (0,1) {$13$};
 \node (23) at (2,1) {$23$};
 \node (1) at (-2,-1) {$13$};
    \node (2) at (0,-1) {$12$};
 \node (3) at (2,-1) {$23$};
     \node (empty) at (0,-3) {$123$};
   \draw (123) -- (12) -- (2) -- (23) -- (123) -- (13) -- (1) -- (12);
\draw (empty) -- (1) -- (13) -- (3) -- (23) -- (2) -- (empty) -- (3);
\end{tikzpicture}
\caption{The $rs$ function $f$ specified by $\mathcal{A}_{(1\,2\,3)}$ represented using the Hasse diagram for $\mathcal{P}(\{1,2,3\})$.} \label{270420d}    
\end{figure*}
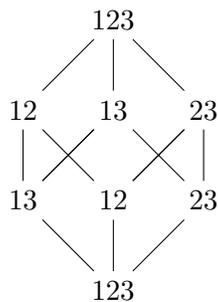

We again adopt our convention outlined before Theorem~\ref{2903a} for simplicity.
In Figure~\ref{270420d}, the underlying framework corresponds to the Hasse diagram for $\mathcal{P}(\{1,2,3\})$, where two underlying subsets (not explicitly shown) of $\{1,2,3\}$ are joined by an edge if and only if they differ by a single element.
At each vertex, the label is the image of the corresponding set under $f$. For example,
the second layer from the bottom shows that $f(1)=13$, $f(2)= 12$ and $f(3)=23$ from left to right.

\begin{theorem}\label{260420b}
Let $f$ be the $rs$ function specified by $\mathcal{A}_{(1\,2\,3)}$. Then $\rsrank(f)=5$. 
\end{theorem}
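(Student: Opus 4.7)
My plan is to prove $\rsrank(f)=5$ by matching an explicit $5$-reaction specifier of $f$ against a short counting lower bound. Both halves work directly from the table of values of $f$ displayed in Figure~\ref{270420d}.

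For the upper bound $\rsrank(f)\leq 5$, I would consider the reaction system
\begin{align*}
\{\,&(\{1\},\{2\},\{1,3\}),\ (\{2\},\{3\},\{1,2\}),\ (\{3\},\{1\},\{2,3\}),\\
&(\emptyset,\{1,2,3\},\{1,2,3\}),\ (\{1,2,3\},\emptyset,\{1,2,3\})\,\}
\end{align*}
and verify functional equivalence to $\mathcal{A}_{(1\,2\,3)}$ by checking all eight subsets of $\{1,2,3\}$. The last two reactions handle exclusively the extremal inputs $\emptyset$ and $\{1,2,3\}$, while each of the first three is enabled at exactly the two inputs at which $f$ takes a common doubleton value (e.g.\ $(\{1\},\{2\},\{1,3\})$ is enabled only at $\{1\}$ and $\{1,3\}$, where $f$ equals $\{1,3\}$).

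For the lower bound $\rsrank(f)\geq 5$, I would count coverage obligations: the total number of pairs $(X,y)$ with $y\in f(X)$ is $\sum_{X\subseteq S}\vert f(X)\vert = 3+2+2+2+2+2+2+3 = 18$. Any reaction $(R,I,P)$ in a specifier of $f$ is automatically $f$-compatible in the sense of Definition~\ref{070420a}, and it covers exactly $\vert N_{R,I}\vert\cdot\vert P\vert$ such pairs, where $N_{R,I}=\{X:R\subseteq X,\,I\cap X=\emptyset\}$. Writing $E_P=\{X:P\subseteq f(X)\}$, $f$-compatibility is precisely $N_{R,I}\subseteq E_P$. The key estimate is $\vert N_{R,I}\vert\cdot\vert P\vert\leq 4$ for every $f$-compatible reaction; once this is in place, $\lceil 18/4\rceil=5$ closes the argument. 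The estimate is checked by splitting on $\vert P\vert$: for $\vert P\vert=1$ (say $P=\{1\}$), the $6$-element set $E_P$ admits Boolean intervals of size at most $4$ (corresponding to cores $(\emptyset,\{3\})$ and $(\{1\},\emptyset)$); for $\vert P\vert=2$, $E_P$ has size $4$ but contains only intervals of size at most $2$; for $\vert P\vert=3$, $E_P=\{\emptyset,\{1,2,3\}\}$ is not an interval at all, so $\vert N_{R,I}\vert\leq 1$ and the coverage is at most $3$.

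The main obstacle is the $\vert P\vert=2$ case of the coverage bound: a naive count suggests coverage $\vert E_P\vert\cdot 2 = 8$ should be reachable, but in fact $E_P$ is not convex inside the Boolean lattice, which prevents any single reaction from being enabled throughout it. The cleanest way to confirm the bound here is to enumerate, for each $2$-subset $P$, the three maximal $f$-compatible reactions with that product (for $P=\{1,2\}$, the cores $(\emptyset,\{1,3\})$, $(\{2\},\{3\})$, and $(\{1,2\},\emptyset)$) and observe directly that each has $\vert N_{R,I}\vert=2$.
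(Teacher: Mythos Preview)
Your proposal is correct and follows essentially the same approach as the paper: the upper-bound witness is identical to the paper's, and the lower bound is the same covering count ($18$ pairs $(X,s)$ with $s\in f(X)$, each $f$-compatible reaction covering at most $4$, hence $\lceil 18/4\rceil=5$). The only cosmetic difference is that the paper establishes the per-reaction bound of $4$ by first invoking Proposition~\ref{290420b} and then enumerating all seventeen maximally $f$-compatible reactions, whereas you bound $\vert N_{R,I}\vert\cdot\vert P\vert$ directly via the case split on $\vert P\vert$; the content is the same.
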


\begin{proof}
It is clear that  $\rsrank(f)\leq 5$ because $f$ can be specified by
$$\{ (123, \emptyset, 123), (\emptyset, 123, 123),    (1,2,13), (2,3,12), (3,1,23)      \}. $$
The maximally $f$-compatible reactions are:
\begin{gather*}
(1, \emptyset, 1), (2, \emptyset, 2), (3, \emptyset, 3), (\emptyset, 1, 2), (\emptyset, 2,3), (\emptyset, 3, 1)\\
(12, \emptyset, 12), (13, \emptyset, 13), (23, \emptyset, 23), (\emptyset, 12, 23), (\emptyset, 13,12), (\emptyset, 23, 13)\\
(1,2,13), (2,3,12), (3,1,23), (123, \emptyset, 123), (\emptyset, 123,123).
\end{gather*}

For each maximally $f$-compatible reaction $a$ except $(123, \emptyset, 123)$ and $(\emptyset, 123,123) $, we have 
$\vert \{\, (X,s)\mid a \text{ is enabled by } X \text{ and } s\in P_a  \,\}\vert = 4$.
Since $\vert \{\, (X,s)\mid X\subseteq S \text{ and } s\in f(X)  \,\}\vert =18$, it follows that  
$f$ cannot be specified by a set of maximally $f$-compatible reactions of size less than five. 
\end{proof}

\section{Reaction system rank of the function specified by $\mathcal{A}_{(1234)}$ }\label{250420b}

In this section, we fix our attention to the set $S=\{1,2,3,4\}$ with the objective of showing that the $rs$ rank of the function $f$ specified by $\mathcal{A}_{(1\,2\,3\,4)}$ is eight (Theorem~\ref{200420b}). The proof of this result employs case-by-case exhaustive analysis. Our convention outlined before Theorem~\ref{2903a} will be heavily used to simplify our notations.
To reiterate, $\mathcal{A}_{(1\,2\,3\,4)}$ is the strictly minimal reaction system induced by the cycle $(1\;2\;3\;4) $, that is, 
$$A_{(1\,2\,3\,4)   }= \{ (1, \emptyset,1), (2,\emptyset, 2), (3, \emptyset,3), (4, \emptyset, 4), (\emptyset, 1, 2), (\emptyset,2,3), (\emptyset, 3, 4), (\emptyset, 4,1 )\}.$$

Let $f$ denote the $rs$ function specified by $\mathcal{A}_{(1\,2\,3\,4)}$. The underlying framework  as shown in Figure~\ref{270420c} corresponds to the Hasse diagram for $\mathcal{P}(\{1,2,3,4\})$. 
Note that beginning from left to right, the middle layer of the figure shows that $f(12)=124$, $f(13)= 13$, $f(14)= 134$, $f(23)=123$, $f(24)=24$, and $f(34)=234$ from left to right.

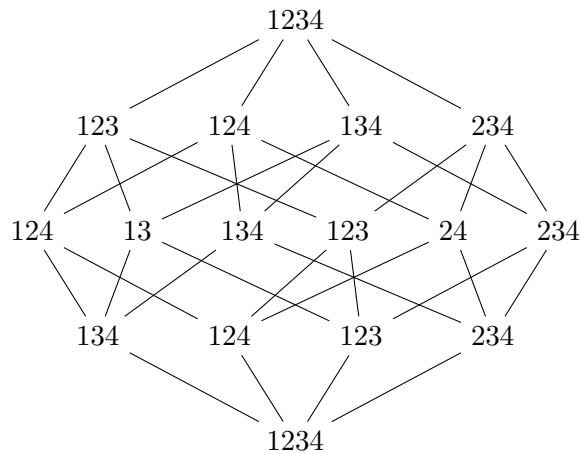
\begin{figure*}
\centering
\begin{tikzpicture}[scale=0.7]
  \node (1234) at (0,4) {$1234$};
  \node (123) at (-3.75,2) {$123$};
  \node (124) at (-1.25,2) {$124$};
  \node (134) at (1.25,2) {$134$};
  \node (234) at (3.75,2) {$234$};
 \node (12) at (-5,0) {$124$};
   \node (13) at (-3,0) {$13$};
    \node (14) at (-1,0) {$134$};
    \node (23) at (1,0) {$123$};
    \node (24) at (3,0) {$24$};
    \node (34) at (5,0) {$234$};
    \node (1) at (-3.75,-2) {$134$};
   \node (2) at (-1.25,-2) {$124$};
    \node (3) at (1.25,-2) {$123$};
 \node (4) at (3.75,-2) {$234$};
     \node (empty) at (0,-4) {$1234$};
   \draw (1234) -- (123) -- (12) -- (124) -- (1234) -- (234) -- (34) -- (134) -- (1234);
\draw (empty) -- (1) -- (12) -- (2) -- (empty) -- (4) -- (34) -- (3) -- (empty);
\draw (123) -- (13) -- (1) -- (14) -- (124) -- (24) -- (2) -- (23) -- (123);
\draw (234) -- (24) -- (4) -- (14) -- (134) -- (13) -- (3) -- (23) -- (234);
\end{tikzpicture}
\caption{The $rs$ function specified by $\mathcal{A}_{(1\,2\,3\,4)}$ represented using the Hasse diagram for $\mathcal{P}(\{1,2,3,4\})$.} \label{270420c}
\end{figure*}

Next, we list down all the maximally $f$-compatible reactions, of which we shall separate into three groups. The first group consists of those where the inhibitor set is empty:
\begin{gather*}
(1, \emptyset,1), (2,\emptyset, 2), (3, \emptyset,3), (4, \emptyset, 4),\\
(12, \emptyset,12), (13,\emptyset, 13), (14, \emptyset,14), (23, \emptyset, 23),
(24, \emptyset,24), (34,\emptyset,34),\\
(123, \emptyset,123), (124,\emptyset, 124), (134, \emptyset,134), (234, \emptyset, 234),\\
(1234, \emptyset, 1234).
\end{gather*}

The second group consists of those where the reactant set is empty:
\begin{gather*}
(\emptyset,1,2), (\emptyset, 2,3), (\emptyset,3,4), (\emptyset, 4,1),\\
(\emptyset,12,23), (\emptyset, 13,24), (\emptyset,14,12), (\emptyset, 23,34),
(\emptyset,24, 13), (\emptyset,34, 14),\\
(\emptyset,123, 234), (\emptyset, 124, 123), (\emptyset,134, 124), (\emptyset, 234, 134),\\
(\emptyset,1234, 1234).
\end{gather*}

The third group consists of those that are nondegenerate, that is, where both the reactant set and the inhibitor set are non-empty:
\begin{gather*}
(1,2, 13), (1,3,14), (2,3,24), (2,4,12),\\
(3,1,23), (3,4,13), (4,1,24), (4,2,34),\\
(12,3,124), (14,2,134), (23,4,123), (34,1, 234),\\
(1,23,134), (2,34,124), (3,14,123), (4,12,234).
\end{gather*}

The following two lemmas will be repeatedly used in the proof of Theorem~\ref{200420b} to reduce the number of cases.

\begin{lemma}\label{020520a}
Let $\sigma=(1 \;2\;3\;4)$. 
Suppose $A$ is any reaction system over $S$ that is functionally equivalent to  $\mathcal{A}_\sigma$. Then  the reaction system $B= \{\, (\sigma^i[R_a], \sigma^i[I_a], \sigma^{i}[P_a]  )\mid a\in A \, \}$ is also functionally equivalent to  $\mathcal{A}_\sigma$ for each $i=1,2,3$.
\end{lemma}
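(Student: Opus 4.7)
The plan is to reduce everything to a single symmetry property of $\res_{\mathcal{A}_\sigma}$, namely that it commutes with $\sigma$ in the sense that $\res_{\mathcal{A}_\sigma}(\sigma[X]) = \sigma[\res_{\mathcal{A}_\sigma}(X)]$ for every $X\subseteq S$. Once this is in hand, the lemma follows from a general transport-by-$\sigma$ identity that holds for any reaction system.

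First I would establish the general identity. For any reaction system $A$ over $S$ and any permutation $\tau$ of $S$, letting $B_\tau = \{\,(\tau[R_a],\tau[I_a],\tau[P_a])\mid a\in A\,\}$, one checks directly from the definitions that the reaction $(\tau[R_a],\tau[I_a],\tau[P_a])$ is enabled by $X$ if and only if $a$ is enabled by $\tau^{-1}[X]$ (because $\tau[R_a]\subseteq X \iff R_a\subseteq \tau^{-1}[X]$ and similarly for inhibitors, using that $\tau$ is a bijection). Taking unions of product sets then yields
$$\res_{B_\tau}(X) \;=\; \tau\bigl[\res_A(\tau^{-1}[X])\bigr] \quad\text{for every } X\subseteq S.$$
Applying this with $\tau = \sigma^i$ gives $\res_B(X) = \sigma^i[\res_A(\sigma^{-i}[X])]$, and since $\res_A = \res_{\mathcal{A}_\sigma}$ by hypothesis, we obtain $\res_B(X) = \sigma^i[\res_{\mathcal{A}_\sigma}(\sigma^{-i}[X])]$.

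Second, I would verify the $\sigma$-equivariance of $\res_{\mathcal{A}_\sigma}$. From the explicit list of reactions in $A_\sigma$, one reads off the closed form
$$\res_{\mathcal{A}_\sigma}(X) \;=\; X \,\cup\, \sigma[S\setminus X],$$
since the reaction $(\{s\},\emptyset,\{s\})$ fires precisely when $s\in X$ and $(\emptyset,\{s\},\{\sigma(s)\})$ fires precisely when $s\notin X$. Because $\sigma$ is a bijection, $\sigma[S\setminus X] = S\setminus \sigma[X]$, and applying $\sigma$ to the whole expression one gets $\res_{\mathcal{A}_\sigma}(\sigma[X]) = \sigma[X]\cup \sigma[S\setminus \sigma[X]] = \sigma[X\cup \sigma[S\setminus X]] = \sigma[\res_{\mathcal{A}_\sigma}(X)]$. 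Iterating, $\res_{\mathcal{A}_\sigma}(\sigma^i[X]) = \sigma^i[\res_{\mathcal{A}_\sigma}(X)]$ for every $i$.

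Combining the two steps, $\res_B(X) = \sigma^i[\res_{\mathcal{A}_\sigma}(\sigma^{-i}[X])] = \res_{\mathcal{A}_\sigma}(\sigma^i\sigma^{-i}[X]) = \res_{\mathcal{A}_\sigma}(X)$, so $B$ is functionally equivalent to $\mathcal{A}_\sigma$, as required. There is no real obstacle here; the only mildly subtle point is to remember that $\sigma$ permutes $S$ and therefore the set-valued operation $X\mapsto \sigma[X]$ is a bijection on $2^S$ that takes complements to complements, which is what makes the equivariance work. The argument makes no special use of $\sigma$ being a $4$-cycle and in fact applies verbatim to $\mathcal{A}_\sigma$ for any $\sigma\in\Sym(S)$.
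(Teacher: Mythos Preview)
Your proof is correct and follows essentially the same idea as the paper's: both rely on the general transport identity $\res_{B_\tau}(X)=\tau[\res_A(\tau^{-1}[X])]$ together with the $\sigma$-symmetry of $\mathcal{A}_\sigma$. The only cosmetic difference is that the paper phrases the symmetry as an equality of reaction sets (the $\sigma^i$-relabelling of $A_\sigma$ is literally $A_\sigma$ again), whereas you phrase it as equivariance of the result function via the closed form $\res_{\mathcal{A}_\sigma}(X)=X\cup\sigma[S\setminus X]$; these are the same fact seen from two sides.
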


\begin{proof}
Fix $i=1,2,3$. Note that $B$ is isomorphic to $A$ in the sense that the elements of $S$ are relabelled by the permutation $\sigma^i$. Hence, $B$ is functionally equivalent to $\{\, (\sigma^i[R_a], \sigma^i[I_a], \sigma^{i}[P_a]  )\mid a\in A_\sigma \, \}$, the latter being equal to $A_\sigma$.
\end{proof}

\begin{lemma}\label{180420b}
Let $\sigma=(1 \;2\;3\;4)$. 
If $A$ is any reaction system over $S$ that is functionally equivalent to  $\mathcal{A}_\sigma$, then so is the reaction system $B= \{\, (I_a, R_a, \sigma^{-1}[P_a]  )\mid a\in A \, \}$.
\end{lemma}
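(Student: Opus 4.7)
The plan is to compute $\res_B$ in terms of $\res_A$ for an arbitrary reaction system $A$, and then exploit a complementary symmetry specific to $f = \res_{\mathcal{A}_\sigma}$. The key general observation is that the transformation $(R_a, I_a, P_a) \mapsto (I_a, R_a, \sigma^{-1}[P_a])$ swaps the roles of reactant and inhibitor sets: a transformed reaction $b = (I_a, R_a, \sigma^{-1}[P_a])$ is enabled by $Y$ iff $I_a \subseteq Y$ and $R_a \cap Y = \emptyset$, which upon setting $X = S \setminus Y$ is precisely the condition that $a$ is enabled by $X$. Taking the union of products over all enabled reactions and pulling the bijection $\sigma^{-1}$ outside the union yields the formula
\[
\res_B(Y) = \sigma^{-1}\bigl[\res_A(S \setminus Y)\bigr] \quad \text{for all } Y \subseteq S,
\]
which is a purely formal statement not requiring any hypothesis on $A$.

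With this identity in hand, the hypothesis $\res_A = f$ reduces the lemma to verifying the single set-theoretic symmetry
\[
f(Y) = \sigma^{-1}\bigl[f(S \setminus Y)\bigr] \quad \text{for all } Y \subseteq S.
\]
Since the definition of $\mathcal{A}_\sigma$ immediately gives the closed-form description $f(X) = X \cup \sigma[S \setminus X]$, this reduces to a short set-theoretic calculation that leverages the bijectivity of $\sigma$ on $S$.

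The main obstacle lies in making the symmetry identity in the second step transparent: the structural point of the lemma is essentially that $f$ intertwines set complementation with the $\sigma^{-1}$-action on products, and getting that cleanly in place is where care is needed. An alternative route that sidesteps a separate verification is to first establish the lemma in the special case $A = \mathcal{A}_\sigma$ by writing out the eight transformed reactions and checking $\res_B = f$ directly on all sixteen subsets; the general formula from the first step then immediately extends this special case to every $A$ functionally equivalent to $\mathcal{A}_\sigma$, since it shows $\res_B$ depends on $A$ only through $\res_A$.
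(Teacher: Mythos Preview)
Your general formula $\res_B(Y)=\sigma^{-1}[\res_A(S\setminus Y)]$ is correct and is precisely the paper's opening observation. The gap is in the second step: the symmetry $f(Y)=\sigma^{-1}[f(S\setminus Y)]$ that you reduce to is \emph{false}. With $Y=\{1\}$ one has $f(\{1\})=\{1,3,4\}$ while $f(\{2,3,4\})=\{2,3,4\}$ and $\sigma^{-1}[\{2,3,4\}]=\{1,2,3\}$. Carrying out the ``short set-theoretic calculation'' from your closed form $f(X)=X\cup\sigma[S\setminus X]$ actually gives
\[
\sigma^{-1}\bigl[f(S\setminus Y)\bigr]=\sigma^{-1}\bigl[(S\setminus Y)\cup\sigma[Y]\bigr]=Y\cup\sigma^{-1}[S\setminus Y]=\res_{\mathcal{A}_{\sigma^{-1}}}(Y),
\]
so your argument in fact shows $B$ is functionally equivalent to $\mathcal{A}_{\sigma^{-1}}$, not to $\mathcal{A}_\sigma$. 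Your proposed alternative route fares no better: writing out the eight transformed reactions for $A=\mathcal{A}_\sigma$ yields $(\emptyset,\{1\},\{4\})$ from $(\{1\},\emptyset,\{1\})$, and one checks directly that the resulting system is $A_{\sigma^{-1}}$ rather than $A_\sigma$.

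This is not a defect of your strategy relative to the paper's: the paper commits the identical slip, asserting that $\{(I_a,R_a,\sigma^{-1}[P_a])\mid a\in A_\sigma\}=A_\sigma$, which fails for exactly the reason just given. The lemma as stated is therefore incorrect. Its role in Theorem~\ref{200420b}, however, is only to let one interchange the roles of $A_1$ and $A_2$ while preserving rs rank; since $\sigma^{-1}=(1\,4\,3\,2)$ is conjugate to $(1\,2\,3\,4)$ via the relabelling $2\leftrightarrow 4$, the systems $\mathcal{A}_{\sigma^{-1}}$ and $\mathcal{A}_\sigma$ are isomorphic and have equal rank. The downstream applications thus survive once the conclusion is amended to ``$B$ is functionally equivalent to $\mathcal{A}_{\sigma^{-1}}$'' (equivalently, once $\sigma^{-1}$ is replaced by $\sigma$ in the definition of $B$ --- but even then the paper's set equality would need to be replaced by a genuine functional check, since the transformed $A_\sigma$ is never literally $A_\sigma$).
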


\begin{proof}
First, we make a general observation. Suppose $A$ is a general reaction system over $S$ and
$B= \{\, (I_a, R_a, \sigma^{-1}[P_a]  )\mid a\in A \, \}$.
Let $C= \{ (I_a, R_a, P_a  )\mid a\in A  \}$.
Fix an arbitrary $X\subseteq S$.
 Then it is easy to see that
$\res_C(X)= \res_A(S\backslash X)$. Also, note that $\res_B(X)= \sigma^{-1}[\res_C(X)]$. 
Therefore, $\res_B(X)= \sigma^{-1}[ \res_A(S\backslash X)     ]$. 

Note that $\{ (I_a, R_a, \sigma^{-1}[P_a]  )\mid a\in A_\sigma \} = A_\sigma$
and so from our observation above, $\res_{A_\sigma}(X)= \sigma^{-1}[ \res_{A_\sigma}(S\backslash X) ]$ for all $X\subseteq S$.
Hence, suppose now that $A$ is any reaction system functionally equivalent to $A_\sigma$ and $B$ is as above. Then for all $X\subseteq S$, we have 
$\res_B(X)= \sigma^{-1}[ \res_A(S\backslash X) ] = \sigma^{-1}[ \res_{A_\sigma}(S\backslash X)     ]
= \res_{A_\sigma}(X)$ as required.
\end{proof}

Before we proceed to our main theorem, we further lay down some terminology to simplify the proof.  We say that a reaction $a$ is \emph{$i$-resourced} if $\vert R_a\cup I_a\vert=i$. For example, a $3$-resourced reaction in $A_2$ is a member belonging to the third row of the second group of maximally $f$-compatible reactions. When we say that a reaction $a$ \emph{accounts for $f(X)$}, it means that $a$ is enabled by $X$ and thus some (or all) of the elements of $f(X)$ are produced/accounted by $a$.   
In most of our case analysis, reactions of $A$ would be one by one supposed or identified until a contradiction is reached. When we say that a reaction $a$ \emph{exhausts $f(X)$}, it means that $a$ is enabled by $X$ and its product set contains the remaining elements of $f(X)$ not accounted by any of the reaction known up to that point.

\begin{theorem}\label{200420b}
Let $f$ be the $rs$ function specified by $\mathcal{A}_{(1234)}$. Then $\rsrank(f)=8$. 
\end{theorem}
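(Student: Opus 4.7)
The upper bound $\rsrank(f) \leq 8$ is immediate since $\mathcal{A}_{(1\,2\,3\,4)}$ itself has exactly eight reactions and specifies $f$. For the lower bound I would argue by contradiction: suppose some $A$ specifies $f$ with $|A| \leq 7$. By Proposition~\ref{290420b}, I may assume every $a \in A$ is drawn from the explicit list of $46$ maximally $f$-compatible reactions displayed just before the theorem. Lemmas~\ref{020520a} and \ref{180420b} together generate a dihedral $D_4$-action on the specifying sets for $f$ (cyclic shift $\sigma$ plus the reactant/inhibitor swap $(R,I,P) \mapsto (I,R,\sigma^{-1}[P])$), which I would use throughout to normalize and collapse symmetric cases.

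Two structural observations would anchor the analysis. First, for each $s \in S$ the fibre $T_s := \{X \subseteq S : s \in f(X)\} = \{X : s \in X \text{ or } \sigma^{-1}(s) \notin X\}$ has cardinality $12$, while every maximally $f$-compatible reaction is enabled by at most $2^{4-1} = 8$ subsets (a $0$-resourced reaction would need a nonempty product contained in $\bigcap_X f(X)$, which is empty because $f(13)=13$ and $f(24)=24$). Consequently at least two reactions of $A$ must have $s$ in their product, forcing $\sum_{a \in A}|P_a| \geq 8$. Second, only reactions in the first group ($I = \emptyset$) can be enabled by $X = 1234$, and only those in the second group ($R = \emptyset$) by $X = \emptyset$; since $f(1234) = f(\emptyset) = 1234$, the product sets within $A \cap \text{Group 1}$ and within $A \cap \text{Group 2}$ must each cover $1234$.

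I would then case split on the resource profile $(k_1, k_2, k_3, k_4)$ of $A$, where $k_i$ counts $i$-resourced reactions in $A$ and $\sum k_i \leq 7$. The global coverage inequality $\sum_{a \in A}|P_a| \cdot 2^{4-|R_a|-|I_a|} \geq 48$ (the total number of pairs $(X,s)$ with $s \in f(X)$), combined with the per-element and per-group constraints above, sharply restricts the admissible profiles: $1$- and $2$-resourced reactions each contribute coverage $8$, $3$-resourced ones contribute $6$, and $4$-resourced ones only $4$, so profiles with many high-resource reactions are ruled out immediately. Within each surviving profile I would use the $D_4$-normalization together with the author's bookkeeping terminology (\emph{accounts for} and \emph{exhausts} $f(X)$) to identify the reactions of $A$ one by one, ultimately exhibiting a specific pair $(X, s)$ with $s \in f(X)$ that no reaction in $A$ can cover, contradicting $\res_A = f$.

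The main obstacle is the bulk and delicacy of the case analysis rather than any single conceptual hurdle. Even after $D_4$-normalization a fair number of hybrid profiles survive the counting bounds, and the hardest sub-cases appear to be those in which $A$ contains several $2$-resourced reactions whose products redundantly cover $1234$ across the two degenerate groups, since such configurations come closest to achieving size seven; ruling them out seems to require element-by-element tracing of coverage across all $16$ subsets, essentially a finite but nontrivial verification guided by the structural facts above.
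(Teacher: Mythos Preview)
Your overall strategy---contradiction at $|A|=7$, restriction to maximally $f$-compatible reactions via Proposition~\ref{290420b}, symmetry reduction via the $D_4$-action from Lemmas~\ref{020520a} and~\ref{180420b}, a global coverage count, and then exhaustive case analysis---is exactly what the paper does. Your structural observations are correct and appear (in slightly different form) in the paper as well: the paper encodes your coverage inequality through the sets $\mathcal{H}_a$ and the identity $|\mathcal{H}_A|=40$ (it excludes $X=\emptyset$ and $X=S$, whereas you count all $48$ pairs), and your second observation about Groups~1 and~2 each having to cover $1234$ is used implicitly throughout.

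The one substantive difference is the choice of primary case split. You propose splitting on the resource profile $(k_1,k_2,k_3,k_4)$; the paper instead partitions $A=A_1\cup A_2\cup A_3$ by degeneracy type ($I_a=\emptyset$, $R_a=\emptyset$, nondegenerate) and splits on $\min\{|A_1|,|A_2|\}$. The paper's choice meshes better with the available tools: the swap symmetry of Lemma~\ref{180420b} literally exchanges $A_1$ and $A_2$, so one may always assume $|A_2|\le|A_1|$, and your own second observation (that $A_1$ and $A_2$ must each have products covering $1234$) is stated directly in terms of these sets. By contrast, reactions with the same resource count but in different groups---say $(12,\emptyset,12)$ versus $(1,2,13)$---behave quite differently with respect to which $f(X)$ they can exhaust, so inside each resource profile you would still need to sub-split by group membership. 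Your decomposition is not wrong, but it adds a layer that the paper's organization avoids; the resulting case tree in the paper (three top-level cases on $\min\{|A_1|,|A_2|\}\in\{1,2,3\}$, then sub-cases on the resource counts \emph{within} $A_1$ and $A_2$) is likely shorter than what your profile-first split would produce.
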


\begin{proof}
We argue by contradiction. Assume $f$ can be specified by some reaction system $A$  over $S$ with cardinality seven. By Proposition~\ref{290420b}, we can assume each reaction in $A$ is maximally $f$-compatible.
We write $A$ as a disjoint union of three subsets, in particular $A= A_1\cup A_2\cup A_3$, where 
$$
A_1 ={} \{\, a\in A \mid I_a= \emptyset \, \},\quad
A_2 ={} \{\, a\in A \mid R_a= \emptyset \, \},\quad
A_3 ={} \{\, a\in A \mid R_a\neq \emptyset \text{ and } I_a\neq \emptyset \, \}.
$$

Suppose $a$ is a reaction over $S$. Let
$$\mathcal{H}_a= \{\,(X,s)\mid   \emptyset \neq X\subsetneq S,\, a \text{ is enabled by } X, \text{ and } s\in P_a      \,\}.$$
Also, if $B$ is a set of reactions over $S$, we let $\mathcal{H}_B=\bigcup_{a\in B} \mathcal{H}_a$.
Note that $\vert \mathcal{H}_a \vert \leq 8$  for every maximally $f$-compatible reaction $a$.
Furthermore, observe that 
$$\vert \mathcal{H}_{A}\vert = \vert \{\, (X,s)\mid \emptyset \neq X\subsetneq S \text{ and } s\in f(X)  \,\}\vert =40.\qquad (\star)$$

\setcounter{case}{0}

\begin{case}
$\min \{ \vert A_1\vert, \vert A_2\vert\}=1$. Without loss of generality, Lemma~\ref{180420b} asserts that
$\vert A_2\vert=1$.
\end{case}

Note that then $A_2= \{ (\emptyset, 1234, 1234) \}$.

\begin{subcase}
$A_1= \{ (1234, \emptyset, 1234) \}$.
\end{subcase}
From $(\star)$ above, it follows that $\{\, (X,s)\mid \emptyset \neq X\subsetneq S \text{ and } s\in f(X)  \,\}$ is a disjoint union of 
all five $\mathcal{H}_a$ for $a\in A_3$. 
This implies that $a$ is $2$-resourced for every $a\in A_3$. However, $A_3$ then cannot exhaust
all $f(X)$ for $X=1,2,3,4$, which gives a contradiction.

\begin{subcase}
$\vert A_1\vert \geq 2$.
\end{subcase}

Note that some $a_X\in A_3$ must account for $f(X)$ for each $X=1,2,3,4$. Also, the $a_X$'s must be distinct. 
It follows that $\vert A_3\vert =4$ and $\vert A_1\vert =2$. 
Hence, at most one reaction in $A_1$ is $1$-resourced and thus at least three reactions in $A_3$ is $3$-resourced. 
Also, in any possible combination, we must have $\vert\mathcal{H}_{A_1}\vert \leq 12$.
Hence, $\vert \mathcal{H}_{A}\vert\leq \vert \mathcal{H}_{A_1}\vert+ \vert \mathcal{H}_{A_3}\vert
\leq 12 + 6 \cdot 3 + 8 =38 <40$, which  contradicts $(\star)$.

\begin{case}
$\min \{ \vert A_1\vert, \vert A_2\vert\}=2$. Without loss of generality, Lemma~\ref{180420b} asserts that
$\vert A_2\vert=2$.
\end{case}

We may assume that $(\emptyset, 1234, 1234)\notin A_2$ or else we can replace it by another maximally $f$-compatible reaction from the second group and $f= \res_A$ still holds.

\begin{subcase}\label{090420a}
Both reactions in $A_2$ are $2$-resourced.
\end{subcase}

\begin{subsubcase}
$A_2=\{  (\emptyset, 24, 13), (\emptyset, 13, 24) \}$.
\end{subsubcase}

Given $A$, note that $A_1$ cannot exhaust $f(X)$ for any $X=1,2,3,4$. 
Hence, it follows that $\vert A_3\vert = 4$ and thus $\vert A_1\vert=1 $, which contradicts our case assumption that $\vert A_1\vert\geq 2$.

\begin{subsubcase}\label{080420c}
$A_2=\{  (\emptyset, 34, 14), (\emptyset, 12, 23) \}$ or $A_2=\{ (\emptyset, 14, 12), (\emptyset, 23, 34) \}$ .
\end{subsubcase}

We may suppose $A_2=\{  (\emptyset, 34, 14), (\emptyset, 12, 23) \}$ due to Lemma~\ref{020520a}.
At least one of $(2, \emptyset, 2)$ or $(4, \emptyset, 4)$ is in $A_1$ for otherwise, since there are at most three reactions in $A_3$, some of the $f(X)$ for $X=1,2,3,4$ cannot be exhausted.
Suppose both $(2, \emptyset, 2)$ and $(4, \emptyset, 4)$ are in $A_1$.
Then we must have $\vert A_3\vert =2$ and   $\vert A_1\vert =3$. It follows that  the third reaction in $A_1$ can be assumed to be $(13, \emptyset, 13)$. However, given what we know about $A_1$ and $A_2$,
the reactions in $A_3$ together cannot exhaust $f(1)$, $f(3)$, $f(124)$, and $f(234)$ simultaneously, which gives a contradiction. Hence, we may now suppose $(2, \emptyset,2)\in A_1$ but $(4, \emptyset, 4)\notin A_1$ as the  case $(4, \emptyset,4)\in A_1$ but $(2, \emptyset, 2)\notin A_1$   is similar. Then it must be the case that $\vert A_3\vert =3$ and  $\vert A_1\vert =2$, thus
$A_1= \{  (2, \emptyset, 2), (134, \emptyset, 134)  \}$. The three reactions in $A_3$ together must exhaust 
$f(1)$, $f(3)$, $f(4)$, $f(123)$, $f(124)$, and $f(234)$ and thus each of them is $2$-resourced. However, none of them can simultaneously exhaust $f(124)$ and either $f(1)$, $f(3)$, or $f(4)$.

\begin{subcase}\label{090420e}
The two reactions in $A_2$ are $1$-resourced and $3$-resourced, respectively. 
\end{subcase}

We may suppose $A_2=\{ (\emptyset, 4,1 ), (\emptyset, 123,234)  \}$ due to Lemma~\ref{020520a}.
Note that $A_1\cup A_2$ cannot exhaust any of $f(1)$, $f(2)$, and $f(3)$ regardless of $A_1$. It follows thats $\vert A_3\vert=3$ and $\vert A_1\vert =2$.

\begin{subsubcase}\label{100420a}
Either both reactions in $A_1$ are $2$-resourced or they are $1$-resourced and $3$-resourced respectively.
\end{subsubcase}

In this case, regardless of $A_1$, at least three among $f(X)$ for $X= 123,124,134,234$ are not exhausted by $A_1\cup A_2$. Furthemore, none of $f(1)$, $f(2)$, and $f(3)$ are exhausted by $A_1\cup A_2$. Hence, any reaction from $A_3$ that exhausts $f(1)$ has to exhaust one 
among $f(X)$ for $X= 123,124,134,234$. However, none of the maximally $f$-compatible reactions from the third group can achieve that. 

\begin{subsubcase}\label{100420b}
Either both reactions in $A_1$ are $3$-resourced or they are $2$-resourced and $3$-resourced respectively. 
\end{subsubcase}

In the latter case, if the $2$-resourced reaction in $A_1$ is not $ (23, \emptyset, 23)$, then
at least three among $f(X)$ for $X= 123,124,134,234$ are not exhausted by $A_1\cup A_2$ and  similar argument as in Case~\ref{100420a} works.
Otherwise, for all the other cases, there is some $X^*\in \{124, 134, 234\}$ such that   
none of the elements in $f(X^*)$ have been accounted. 
Since each reaction in $A_3$ must exhaust one of $f(1)$, $f(2)$, and $f(3)$, it
forces $(1,23,134)\in A_3$. Hence, another reaction in $A_3$ must simultaneously exhaust $f(X^*)$ and either $f(2)$ or $f(3)$. However, this is impossible.

\begin{subcase}\label{100420f}
The two reactions in $A_2$ are $2$-resourced and $3$-resourced respectively.
\end{subcase}

We may suppose the $3$-resourced reaction in $A_2$ is $(\emptyset, 123, 234)$ due to Lemma~\ref{020520a}.

\begin{subsubcase}
$\vert A_1\vert=2$.
\end{subsubcase}

Employing Lemma~\ref{180420b}, we may assume that
the two reactions in $A_1$ either are both \mbox{$3$-resourced} or are $2$-resourced and \mbox{$3$-resourced}, respectively. Otherwise, we are reduced to Case~\ref{090420a} or Case~\ref{090420e}. We may further assume $A_2= \{ (\emptyset, 34, 14), (\emptyset, 123, 234) \}$ as the other two cases are similar. The rest of the argument is similar to that of Case~\ref{100420b}. There is some $X^*\in \{123, 124, 134, 234\}$ such that   
none of the elements in $f(X^*)$ have been accounted by $A_1\cup A_2$.
Since each reaction in $A_3$ must exhaust one of $f(1)$, $f(2)$, and $f(3)$, it forces $(3,14,123)\in A_3$. 
Hence, another reaction in $A_3$ must simultaneously exhaust $f(X^*)$  
  and either $f(1)$ or $f(2)$. However, this is impossible.

\begin{subsubcase}
$\vert A_1\vert\geq 3$.
\end{subsubcase}

Regardless of $A_1$, if $A_2= \{ (\emptyset, 24, 13), (\emptyset, 123, 234) \}$, then the reactions (at most two) in $A_3$  cannot exhaust $f(1)$, $f(2)$, and $f(3)$ simultaneously. Now, consider the case $A_2= \{ (\emptyset, 34, 14), (\emptyset, 123, 234) \}$. We must have $(2,\emptyset, 2 )\in A_1$.
Also, to exhaust $f(3)$, we must have $(3, 14, 123)\in A_3$, regardless of whether $(3, \emptyset,3)\in A_1$ or not. However, it can be verified that the remaining reactions in $A_1$ and $A_3$ cannot possibly exhaust $f(1)$, $f(123)$, $f(124)$, $f(134)$, and $f(234)$.   The case $A_2= \{ (\emptyset, 14, 12), (\emptyset, 123, 234) \}$ is similar.

\begin{subcase}
Both reactions in $A_2$ are $3$-resourced.
\end{subcase}

\begin{subsubcase}
$\vert A_1\vert=2$.
\end{subsubcase}

Due to Lemma~\ref{180420b}, we may assume that both reactions in $A_1$ are $3$-resourced. Otherwise, we are reduced to Case~\ref{090420a}, Case~\ref{090420e}, or Case~\ref{100420f}. However, then
$$\vert \mathcal{H}_A\vert \leq  \vert \mathcal{H}_{A_1}\vert +  \vert \mathcal{H}_{A_2}\vert + \vert \mathcal{H}_{A_3}\vert  \leq 3\cdot 2 + 3\cdot 2 + 8 \cdot 3 =36,$$
which contradicts $(\star)$.

\begin{subsubcase}
$\vert A_1\vert\geq 3$.
\end{subsubcase}

We may suppose $A_2=\{(\emptyset, 234, 134), (\emptyset, 134, 124)\}$ as the other cases are similar. 
Regardless of $A_1$, to exhaust $f(3)$, we must have $(3,14,123)\in A_3$. Similarly, to exhaust $f(4)$, we must have $(4,12,234)\in A_3$. Therefore, the three reactions in $A_1$ have to completely account for all $f(X)$ for $X=123, 124, 134, 234$. However, this is impossible.

\begin{case}
$\min \{ \vert A_1\vert, \vert A_2\vert\}=3$. Again, without loss of generality,
$\vert A_2\vert=3$.
\end{case}

\begin{subcase}
$\vert A_1\vert=4$.
\end{subcase}

Note that element $4$ from $f(12)$,  $3$ from  $f(14)$, $1$ from $f(23)$, and $2$ from $f(34)$ 
cannot be accounted by any maximally $f$-compatible reaction from the first group. 
Hence, regardless of $A_1$, no reaction from $A_2$ can simultaneously
exhaust two of the $f(X)$ for $X=12, 14, 23, 34$ and thus $A_2$ cannot exhaust all four of them.

\begin{subcase}
$\vert A_1\vert=3$.
\end{subcase}

\begin{subsubcase}\label{130420a}
At least two reactions in $A_2$ are $1$-resourced.
\end{subsubcase}

Due to Lemma~\ref{020520a}, we may suppose $\{(\emptyset, 4,1), (\emptyset,1,2)\}\subseteq A_2$ 
or $\{(\emptyset, 4,1), (\emptyset,2,3)\}\subseteq A_2$.
First, we deal with the case $\{(\emptyset, 4,1), (\emptyset,1,2)\}\subseteq A_2$.
Note that  $f(1)$, $f(2)$, and $f(4)$ cannot be exhausted by $A_1$
and thus they have to be exhausted by the only reaction in $A_3$ together with the remaining reaction in $A_2$.
Hence, it follows that $(\emptyset, 23, 34)\in A_2$ and either $(2,3,24)\in A_3$ or $(2,34,124)\in A_3$.
Furthermore, $(3, \emptyset, 3)\in A_1$. However, it can be verified that the remaining two reactions in $A_1$ cannot exhaust all $f(X)$ for $X=123, 124, 134, 234$. The case $\{(\emptyset, 4,1), (\emptyset,2,3)\}\subseteq A_2$ is similar and simpler as now all $f(X)$ for $X=1,2,3,4$ cannot be exhausted by $A_1$.

\begin{subsubcase}\label{200420a}
Exactly one reaction in $A_2$ is $1$-resourced. 
\end{subsubcase}

We may suppose $(\emptyset, 4,1)\in A_2$ due to Lemma~\ref{020520a}.
If more than one of the reactions in $A_1$ is $1$-resourced, then we are reduced to Case~\ref{130420a} by means of Lemma~\ref{180420b}. Else if none of the reactions in $A_1$ is $1$-resourced, 
by considering $f(X)$ for $X=1,2,3,4$, it leads to one of the following two possibilities:
\begin{enumerate}
\item $A_2= \{ (\emptyset,4,1 ),(\emptyset,23, 34),(\emptyset, 12, 23)\}$ while $(2, 3, 24)\in A_3$ or $(2,34,124)\in A_3$;
\item $A_2 = \{(\emptyset,4,1), (\emptyset,23, 34), (\emptyset, 13, 24) \}$ while $(3, 1, 23)\in A_3$ or $(3,14,123)\in A_3$.
\end{enumerate}
However, in either case, four of the $f(X)$ for $X=12,13,14,23,24,34$ are not completely accounted by $A_2\cup A_3$, but the three reactions in $A_1$ cannot exhaust them all. Therefore, we may suppose exactly one of the reactions in $A_1$ is $1$-resourced.

First, we suppose $(4, \emptyset, 4)\in A_1$. Since none of the $f(X)$ for $X=12,13,14,$ $23, 24,34$ is exhausted, it follows that the only reaction in $A_3$ must exhaust two of them and each of the remaining four reactions in $A_1\cup A_2$ must exhaust one of them. By considering $f(12)$, it implies that $(2,3,24)\in A_3$.
However, the remaining parts of $f(14)$ cannot be exhausted by any of the remaining reactions in $A_1\cup A_2$.


Next, we suppose $(1, \emptyset,1)\in A_1$ or $(2, \emptyset, 2)\in A_1$. Similarly, the only reaction in $A_3$ must exhaust two among $f(X)$ for $X=12,13,14,23,24,34$ and each of the remaining reactions in $A_1\cup A_2$ must exhaust one of them. However, $f(34)=234$ is yet to be accounted for and thus cannot be exhausted by any one of them.

Finally, we suppose $(3, \emptyset, 3)\in A_1$. First, if $(2,3,24)\in A_3$, then it can be verified that the two remaining reactions in $A_1$ cannot exhaust all $f(X)$ for $X=123,124,134,234$. Next, if $(4,1,24)\in A_3$, then the two remaining reactions in $A_2$ cannot exhaust all $f(X)$ for $X=1,2,3,4$.
Otherwise, if the reaction in $A_3$ is other than the two above, then it should now be easier to verify that a similar contradiction can be reached.

\begin{subsubcase}
None of the reactions in $A_2$ are $1$-resourced.
\end{subsubcase}

Due to Lemma~\ref{180420b}, we may assume that none of the reactions in $A_1$ are $1$-resourced as well for otherwise, we are reduced to Cases~\ref{130420a} and \ref{200420a}. However, by some simple counting, it follows that
each reaction in $A_1\cup A_2$ accounts for two of the elements 
of $f(X)$ for $X=12,13,14,23,24,34$ while the only reaction in $A_3$ accounts for the remaining four of the $16$ elements. However, this is impossible.
\end{proof}

The following corollary now follows from Theorem~\ref{200420b}.

\begin{corollary}
	For $\vert S\vert=4$, the largest reaction system rank attainable by an $rs$ function specified by a strictly minimal reaction systems over $S$ is eight.
\end{corollary}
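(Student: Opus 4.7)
The plan is to combine the general upper bound already noted in Section~5 with the attainability witnessed by Theorem~\ref{200420b}. Recall from the discussion preceding Theorem~\ref{2903a} that for any background set $S$, the largest $rs$ rank attainable by an $rs$ function specified by a strictly minimal reaction system is at most $2\vert S\vert$, since the reaction with core $(\emptyset, \emptyset)$ (if present) can always be absorbed into the product sets of the remaining reactions without altering the specified function. For $\vert S\vert=4$ this gives the upper bound of eight.

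For the matching lower bound I would simply observe that $\mathcal{A}_{(1\,2\,3\,4)}$ is itself a strictly minimal reaction system over $S=\{1,2,3,4\}$, and by Theorem~\ref{200420b} the $rs$ function $f$ it specifies satisfies $\rsrank(f)=8$. Hence the maximum is realized.

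Putting these two facts together yields the corollary with essentially no additional work: the largest value is bounded above by $2\vert S\vert=8$, and this value is achieved by the strictly minimal reaction system induced by the full $4$-cycle. Since all the heavy lifting has been done in Theorem~\ref{200420b}, there is no genuine obstacle here; the corollary is a direct packaging of the general upper bound together with the sharp lower bound established in the previous section.
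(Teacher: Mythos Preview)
Your proposal is correct and matches the paper's approach: the paper simply states that the corollary follows from Theorem~\ref{200420b}, implicitly invoking the $2\vert S\vert$ upper bound already discussed in Section~5, which is exactly what you have spelled out. There is nothing to add.
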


\section{Conclusion}

As a conclusion, our study shows that even strictly minimal reaction systems present some interesting and challenging purely mathematical problems. Precisely, our work manages to answer Question~\ref{2003a} only up to the quaternary alphabet and provides further evidence that reaction system rank is combinatorially hard to be dealt with rigorously.

As opposed to the case of $\mathcal{A}_{(1\,2\,3)}$, our proof of Theorem~\ref{200420b} shows that
the function specified by $\mathcal{A}_{(1\,2\,3\,4)}$ is complicated in the sense that no set of less than eight reactions  is sufficient to specify it. Hence, we expect higher alphabets to behave similarly.

\begin{conjecture}\label{210520a}
	For $\vert S\vert \geq 4$, the reaction system rank of any $rs$ function over $S$ specified by a strictly minimal reaction system induced by a cycle of length $\vert S\vert$ is $2 \vert S\vert$. 
\end{conjecture}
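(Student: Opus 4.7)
My plan is to argue by induction on $n = \vert S\vert$, with base case $n = 4$ supplied by Theorem~\ref{200420b}. The upper bound $\rsrank(f) \le 2n$ is immediate since $A_\sigma$ itself has $2n$ reactions.

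For the lower bound I would first extend the preparatory analysis of Section~\ref{250420b} to arbitrary $n$. A direct computation shows that the maximally $f$-compatible reactions, in the sense of Definition~\ref{070420a}, are precisely the triples $(R, I, R \cup \sigma(I))$ with $R, I \subseteq S$ disjoint and $R \cup I \neq \emptyset$. By Proposition~\ref{290420b}, any minimum-rank reaction system $A$ specifying $f$ may be taken to consist of such reactions, and I would partition $A = A_1 \cup A_2 \cup A_3$ according to whether $I_a = \emptyset$, $R_a = \emptyset$, or neither. Cyclic shift (generalizing Lemma~\ref{020520a}) and inversion (generalizing Lemma~\ref{180420b}) generate a dihedral action of functional equivalences, under which I normalize the cases.

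The central combinatorial input is an element-wise covering lemma. For each $s \in S$, the subfamily $A_s = \{a \in A : s \in P_a\}$ must cover, by the Boolean intervals $[R_a, S \setminus I_a]$, the set $\mathcal{F}_s = \{X \subseteq S : s \in f(X)\} = \{X : s \in X\} \cup \{X : \sigma^{-1}(s) \notin X\}$. Since $\mathcal{F}_s$ contains both $\emptyset$ and $S$ but excludes the singleton $\{\sigma^{-1}(s)\}$, it is not itself an interval, so $\vert A_s\vert \ge 2$. Summing over $s$ yields $\sum_a \vert P_a\vert = \sum_s \vert A_s\vert \ge 2n$. When every reaction has $\vert P_a\vert = 1$, this already gives $\vert A\vert \ge 2n$. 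The substantive step is to handle reactions with $\vert P_a\vert \ge 2$: such a reaction has its rectangle confined to $\mathcal{F}_s \cap \mathcal{F}_t$ for the elements $s, t \in P_a$, a proper subset of each $\mathcal{F}_s$, so additional reactions in $A_s \cup A_t$ are forced to cover the residual portions, and these obey further structural constraints that resist arbitrary sharing.

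The hardest part, and the main obstacle, is converting this qualitative reasoning into a uniform quantitative bound $\vert A\vert \ge 2n$ that holds for all $n \ge 4$. The proof of Theorem~\ref{200420b} already needs intricate case analysis on $(\vert A_1\vert, \vert A_2\vert)$ with sub-cases by resource counts, and a direct generalization suffers from combinatorial case explosion. I would pursue two parallel fallback strategies: (i) formulate the problem as set cover and exhibit a feasible LP dual of total value $2n$, concentrated on the ``boundary'' pairs $(\{i\}, j)$ and $(S \setminus \{i\}, j)$ where coverage is tightest, since there the admissible reactions each contribute at most a bounded dual mass; and (ii) attempt an inductive reduction that, from any purported witness of $\rsrank < 2(n+1)$ for the $(n+1)$-cycle, extracts a witness of $\rsrank < 2n$ for the $n$-cycle. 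Strategy (ii) is nontrivial because restricting $f$ to subsets avoiding a chosen element does not cleanly recover the $n$-cycle function, so any such projection will need to correct for a constant-in-output term arising from the ``wrap-around'' of $\sigma$.
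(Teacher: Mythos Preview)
The statement you are attempting to prove is labelled \emph{Conjecture} in the paper and is presented in the conclusion as an open problem; the paper does not supply a proof. There is therefore nothing to compare your proposal against. What you have written is not a proof either: it is a research outline that explicitly identifies its own main obstacle (``converting this qualitative reasoning into a uniform quantitative bound'') and then lists two fallback strategies, neither of which is carried out. The LP-duality idea and the inductive projection idea are both stated as intentions, not arguments.

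A few concrete remarks on the content of your outline. First, your characterisation of the maximally $f$-compatible reactions is not quite right: the triple $(R,I,R\cup\sigma(I))$ with $R,I$ disjoint is always $f$-compatible, but it is \emph{maximally} so only under the further restriction $R\cap\sigma(I)=\emptyset$. For instance with $\sigma=(1\,2\,3\,4)$ the pair $(R,I)=(\{2\},\{1\})$ gives product $\{2\}$, yet $(\{2\},\emptyset,\{2\})$ already dominates it; and indeed $(\{2\},\{1\},\{2\})$ does not appear in the paper's list for $n=4$. This does not break your covering heuristic, but it does affect any case analysis you might attempt downstream. Second, your element-wise covering bound $\sum_a\vert P_a\vert\ge 2n$ is correct and does settle the singleton-product case cleanly, which is a genuine simplification over the paper's treatment of that sub-case. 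Third, the inductive strategy (ii) faces exactly the difficulty you flag: deleting an element from the cycle does not produce the shorter cycle's $rs$ function, and there is no evident way to repair the two ``broken'' transitions with a bounded number of reactions. Unless you can make one of the two fallback strategies precise, the proposal remains a plan rather than a proof, and the conjecture stays open as the paper leaves it.
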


For the quinternary and the next few alphabets, we can utilize the case reduction strategy used in our proof of Theorem~\ref{200420b} to improve the efficiency of any computational solution. However, this would certainly meet its limitation. Therefore,   
special functions specified by strictly minimal reaction system could possibly be ingeniously identified for which their reaction system rank can be feasibly shown to be $2\vert S\vert$, thus answering Question~\ref{2003a} for all alphabets without actually proving Conjecture~\ref{210520a}. 

Finally, various decision problems regarding biological properties or dynamical behavior of reaction systems have been established to be NP-complete or PSPACE-complete, such as in \cite{azimi2015complexity, dennunzio2019complexity}. Therefore, as a potential future direction inspired by this work, one can study the complexity of problems related to irreducibility and reaction system rank for the class of minimal or strictly minimal reaction systems.

\section*{Ackowledgment}

The first author acknowledges support of Fundamental Research Grant Scheme \linebreak No.~203.PMATHS.6711644 of Ministry of Education, Malaysia, and Universiti Sains Malaysia.


\end{document}